\DeclareMathOperator{\dcdot}{\cdot\cdot}
\newtheorem{theorem}{Theorem}%[section]
\newtheorem{lemma} [theorem] {Lemma}
\newtheorem{proposition}[theorem]{Proposition}
\theoremstyle{theorem}
\newtheorem{corollary}[theorem] {Corollary}
\theoremstyle{definition}
\newtheorem{definition}{Definition}
\theoremstyle{theorem}
\newtheorem{thmx}{Theorem}
\def\BState{\State\hskip-\ALG@thistlm}
\newcounter{claimCount}
\newenvironment{claim}{\medskip

    \noindent\refstepcounter{claimCount}\textbf{Claim~\arabic{claimCount}.}}{

    \medskip}
\newenvironment{claimproof}{\noindent\textit{Proof of Claim~\arabic{claimCount}.}}{\hfill\ensuremath{\qedsymbol} \tiny{Claim~\arabic{claimCount}}

    \medskip}
\begin{document}
\title{Constructing Geometric Graphs of Cop Number Three}	

\author{
Seyyed Aliasghar Hosseini\\
\texttt{sahossei@sfu.ca}
\and
Masood Masjoody\\
\texttt{mmasjood@sfu.ca}
%\and
%Bojan Mohar\\
%\texttt{Mohar@sfu.ca}
\and
Ladislav Stacho\\
\texttt{lstacho@sfu.ca}
}

\maketitle

\begin{abstract} 
The game of cops and robbers is a pursuit game on graphs where a set of agents, called the cops try to get to the same position of another agent, called the robber. Cops and robbers has been studies on several classes of graphs including geometrically represented graphs. For example, it has been shown that string graphs, including geometric graphs, have cop number at most 15 \cite{gavenvciak2018cops}. On the other hand, little is known about geometric graphs of any cop number less than 15 and there is only one example of a geometric graph of cop number three that has as many as 1440 vertices \cite{Andrew}. In this paper we present a construction for subdividing planar graphs of maximum degree $\le 5$ into geometric planar graphs of at least the same cop number. Indeed, our construction shows that there are infinitely many planar geometric graphs of cop number three. We also present another construction that consists in clique substitutions alongside subdividing the edges in a planar graph of maximum degree $\le 9$, resulting in geometric, but not necessarily planar, graphs of at least the same cop number as the starting graphs. Finally, we present a geometric graph of cop number three with 440 vertices.
\end{abstract}
\section{Introduction}\label{sec:intro}
A {\em game of cops and robbers} is a pursuit game on graphs, or a class of graphs, in which a set of agents, called the {\em cops}, try to get to the same position as another agent, called the {\em robber}. Among several variants of such a game, our focus will be on the one introduced in \cite{Aigner}, which is played on finite undirected graphs. Hence, we shall simply refer to this variant as ``the" game of cops and robbers. Let $G$ be a simple undirected graph. Consider a finite set of cops and a robber. The game on $G$ goes as follows. At the beginning of the game (step 1) each cop will be positioned in a vertex of $G$ and then the robber will be positioned in some vertex of $G$. In each of the subsequent steps each agent either moves to a vertex adjacent to its current position or stays still, with the robber taking turn after all of the cops. The cops win in a step $i$ of the game if in that step one of the cops gets to the vertex where the robber is located. The minimum number of cops that are guaranteed to capture the robber on $G$ in a finite number of steps is called the {\em cop number}  of $G$ and denoted $C(G)$. Graph $G$ is said to be {\em $k$-copwin} ($k\in\mathbb{N}$) if $C(G)\le k$. Since the cop number of a graph is equal to the  sum of the cop numbers of its components, whenever the cop number of a graph $G$ is concerned  $G$ is considered to be connected, unless otherwise is stated. Among the class of graphs with a bounded cop number one can mention the class of trees, which can easily shown to have cop number one, and the class of planar graphs, which have cop number at most three:

\begin{theorem}[\cite{Aigner}]\label{thm: planar-3-cop-win}
For every planar graph $G$ one have $C(G)\le 3$. 
\end{theorem}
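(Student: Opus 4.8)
The plan is to reproduce the classical Aigner--Fromme-style argument, whose engine is the fact that a single cop can permanently \emph{guard} a shortest path. Concretely, I would first prove the following lemma: if $P=(p_0,p_1,\dots,p_k)$ is an isometric (geodesic) path in $G$, then after finitely many moves one cop can occupy a vertex of $P$ in such a way that the robber can never step onto $P$ without being caught on the next cop turn, and the cop can maintain this forever. Granting the lemma, the theorem follows by using the three cops to chase the robber into ever smaller regions of the plane cut out by guarded geodesics, until no region remains.

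For the lemma, the idea is that the guarding cop shadows the robber's projection onto $P$: it tries to stand at $p_j$ with $j=\min\{\,d_G(p_0,r),\,k\,\}$, where $r$ is the robber's current vertex. Because $P$ is isometric we have $d_G(p_0,p_i)=i$, so $d_G(p_0,r)$ changes by at most one per robber move, hence the target index changes by at most one and the cop, already on $P$, can keep up. The isometry also controls threats: if the robber at $r$ is adjacent to some $p_i\in P$ then $|i-d_G(p_0,r)|\le d_G(r,p_i)=1$, so $p_i$ is within distance one of $p_j$ along $P$ and the cop catches the robber should it move onto $P$. To reach this configuration in the first place, the cop simply walks to an endpoint of $P$ (ignoring the robber) and then moves monotonically toward its target index; since the target moves by at most one per step and the path has length $k$, the cop synchronizes within finitely many moves. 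Getting this last synchronization/threat bookkeeping exactly right is the most delicate point of the lemma.

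For the main argument, start with all three cops and let $C_1$ guard a geodesic $P_1$; either the robber is caught or it is confined to (the closure of) a single region of the fixed planar embedding cut along $P_1$. In general I would maintain the invariant that the robber's \emph{territory} $R$ is the region of the plane enclosed by at most two geodesics of $G$ joining a common pair of ``corner'' vertices $u,v$, that these boundary geodesics are guarded by two of the cops, and that the third cop is free. The inductive step is: the free cop approaches the robber and then comes to guard a suitably chosen new geodesic $Q$ lying inside $R$; by the Jordan curve theorem, $Q$ together with arcs of the two boundary geodesics subdivides $R$ into smaller pieces, the robber is trapped in one of them, $R'$, and $|V(R')|<|V(R)|$. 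One then re-presents $\partial R'$ as at most two geodesics between two corners, which frees a cop, and recurses. Since $|V(R)|$ strictly decreases, after finitely many rounds the territory is empty and the robber is caught, so $C(G)\le 3$.

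The main obstacle is exactly the book-keeping in that last paragraph: one must choose the new geodesic $Q$ so that (i) a definite piece of the subdivision is guaranteed to contain the robber, (ii) that piece is strictly smaller, and (iii) its boundary can again be written as at most two geodesics with common endpoints, so that the invariant is restored and a cop is genuinely freed. This needs a careful planar-topological case analysis of how $Q$ meets the corners $u,v$ and the two boundary geodesics, and it is the place where planarity is used in an essential way rather than merely through the existence of shortest paths.
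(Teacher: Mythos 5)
The paper does not prove this theorem at all: it is imported as a known result from \cite{Aigner}, so there is no internal proof to compare against. Your outline is, in fact, the original Aigner--Fromme argument: the isometric-path guarding lemma plus an inductive shrinking of the robber's territory, whose boundary is maintained as one or two guarded geodesics between common corners so that a cop is freed at each stage. The skeleton is correct, and your threat analysis for the guarding lemma (if $r$ is adjacent to $p_i$ then $|i-d_G(p_0,r)|\le 1$ by isometry, so the shadowing cop at $p_{\min\{d_G(p_0,r),k\}}$ covers $p_i$) is exactly right.

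That said, as written this is a plan rather than a proof, and the two places you flag as ``delicate'' are where essentially all of the content lives. For the lemma, the synchronization step needs an actual potential argument: the distance from the cop's index to the target index is only non-increasing (the target can retreat by one each round), so you must use the boundedness of the index in $[0,k]$ to force eventual coincidence, and you must also specify what the cop does while not yet synchronized so that the robber gains nothing in the meantime. For the induction, the unresolved issues are precisely: (i) the new path $Q$ must be chosen isometric in the subgraph consisting of the robber's territory together with its boundary (not merely in $G$), since that is the graph in which the guarding lemma must be invoked and in which the robber is actually confined; (ii) one must verify in each planar configuration of $Q$ relative to the corners $u,v$ that the robber's new region has strictly fewer vertices and that its boundary again decomposes into at most two geodesics with common endpoints, which is a genuine case analysis (including degenerate cases where the territory is bounded by a single path, or where $Q$ shares subpaths with the old boundary). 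None of this is wrong, but until it is carried out you have reproduced the statement of the strategy rather than its verification; for the purposes of this paper, citing \cite{Aigner} as the authors do is the appropriate resolution.
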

Our results in this paper concern a class of geometrically represented graphs, known as {\em geometric graphs}. A geometric graph is a (drawing of a) graph having a finite subset $V$ of the plane as its vertex set, and whose edge set consists of line-segments between all pairs of distinct points in $V$ with Euclidean distance less that or equal to a positive constant $r$, called the {\em parameter} of the geometric graph. As such, ahead of determining whether a given drawing of a graph is geometric we need to fix its prospective parameter $r$, in which case $r$ shall be referred to as {\em the parameter of the geometric graphs}. A path drawn in the plane as a geometric graph is called a {\em geometric path}. Geometric graphs constitute a proper subclass of {\em string graphs}, as the intersection graphs of strings (or curves) in the plane. It has been shown that $C(G)\le 15$ for every string graph $G$ \cite{gavenvciak2018cops}, but there is no geometric graph known to have the cop number $\ge 4$. On the other hand, in \cite{Andrew} the authors provide a geometric graph on 1440 vertices with cop number three. Indeed, they represent a planar graph with girth five and minimum degree 3 as a geometric graph and use the following fact which gives a lower bound for cop number of graphs having girth $\ge 5$:

\begin{proposition}[\cite{Aigner}]\label{prop: girth-cop-num.}
For a graph $G$ with minimum degree $\delta$ one has $C(G)\ge \delta$ provided the girth of $G$ is at least 5.
\end{proposition}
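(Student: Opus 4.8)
\emph{Proof sketch.} We may assume $\delta\ge 2$, since otherwise $C(G)\ge\delta$ is immediate; then $G$ contains a cycle, so the girth hypothesis has content. The plan is to exhibit a strategy by which the robber evades any $\delta-1$ cops forever. Everything rests on a single local consequence of girth at least $5$: if a vertex $v$ carries no cop and $c$ is the location of some cop, then $c$ equals or is adjacent to at most one neighbour of $v$. For if $c$ equalled or were adjacent to two distinct neighbours $w_1,w_2$ of $v$ — and note $c\ne v$, as $v$ is cop-free — then among $v,w_1,w_2,c$ we would find a triangle $vw_1w_2$ (when $c\in\{w_1,w_2\}$) or a $4$-cycle $vw_1cw_2$ (otherwise), contradicting $\mathrm{girth}(G)\ge5$. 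Consequently, when at most $\delta-1$ cops are on the board, at most $\delta-1$ neighbours of any cop-free vertex $v$ lie in $\bigcup_c N[c]$; since $\deg v\ge\delta$, some neighbour $w$ of $v$ has $N[w]$ disjoint from the set of cops. Call such a $w$ \emph{safe}.

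Given this, I would have the robber maintain the invariant that he ends each of his turns on a safe vertex. This is self-perpetuating: if the robber ends a turn on a safe vertex $v$, then on the cops' following turn no cop can reach $v$ (each cop traverses at most one edge, and none lay within distance $1$ of $v$), so in particular the robber is not captured and $v$ is still cop-free when the robber next moves; applying the local observation to $v$ after the cops have moved produces a safe vertex in $N[v]$ (possibly $v$ itself), to which the robber steps. Hence the robber is never caught, $\delta-1$ cops do not suffice, and $C(G)\ge\delta$.

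To start, the robber picks any vertex $v_0$ with $N[v_0]$ cop-free, which makes the invariant hold in step $1$. Such a $v_0$ exists: the $\delta-1$ cops dominate at most $\sum_c(\deg c+1)$ vertices, whereas counting any vertex, its $\ge\delta$ neighbours, and their $\ge\delta-1$ further neighbours — all distinct when $\mathrm{girth}(G)\ge5$ — gives both $|V(G)|\ge 1+\delta+\delta(\delta-1)=1+\delta^2$ and $\deg c\le(|V(G)|-1)/\delta$ for every cop $c$; a one-line estimate then yields $\sum_c(\deg c+1)\le(\delta-1)\bigl(1+(|V(G)|-1)/\delta\bigr)<|V(G)|$.

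The only routine work should be the tiny case analysis behind the local observation and this arithmetic for the opening move. The one place demanding care is the bookkeeping of the order of play — confirming that the safety invariant genuinely forbids a capture during the cops' turn — and the invariant above is phrased precisely so that this verification is immediate; I do not foresee a substantial obstacle beyond stating these details cleanly.
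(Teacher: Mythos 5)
The paper does not prove this proposition; it is quoted from Aigner--Fromme. Your argument is a correct and complete rendition of their classical proof (each cop threatens at most one neighbour of a cop-free vertex because girth $\ge 5$ forbids triangles and $4$-cycles, so $\delta-1$ cops always leave a safe neighbour; the Moore-bound counting for the opening position also checks out), so there is nothing to add.
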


Our main results in this paper are as follows:

\begin{thmx}\label{theorem: geoplanar1}
Every planar graph $G$ with maximum degree $\Delta\le 5$ has a subdivision into a planar geometric graph with cop number $C(G)$ or $C(G)+1$.
\end{thmx}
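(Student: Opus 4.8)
My plan splits the argument into a part that bounds the cop number of a suitable subdivision of $G$ and a part that realises such a subdivision as a planar geometric graph. For the first, for $\ell\ge1$ let $G^{(\ell)}$ denote the graph obtained from $G$ by replacing every edge with a path of length $\ell$ (original vertices keep their degrees; all new vertices have degree $2$). The goal is
\[
C(G)\ \le\ C\bigl(G^{(\ell)}\bigr)\ \le\ C(G)+1 .
\]
For the lower bound I would lift a \emph{robber} strategy: from a strategy evading $C(G)-1$ cops on $G$, the robber on $G^{(\ell)}$ projects each cop to a nearest original vertex, feeds these projected positions to the $G$‑strategy, and, whenever told to cross an edge $uv$, walks the length‑$\ell$ path replacing $uv$ during the next $\ell$ rounds; since a cop moves at most $\ell$ edges in $\ell$ rounds, the projected cops move at most one $G$‑edge per such block, so the two games stay in step. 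For the upper bound I would use $C(G)$ \emph{shadow} cops running a winning $G$‑strategy in slow motion (one $G$‑round per $\ell$ rounds of $G^{(\ell)}$) against the projected robber, plus one extra cop that, once the robber has been confined to the interior of a single subdivided edge $P_{zy}$ with $z$ occupied, closes in from the $y$‑end. The delicate points are the endgame of the upper bound — ensuring the extra cop traps the mid‑edge robber before it slips out the far end, handled by keeping all far ends guarded at the instant the virtual $G$‑game ends — and, for the lower bound, a cop that wanders into the very path the robber is crossing, controlled by the slow‑motion correspondence; these are the routine but careful pieces, and one may alternatively invoke known behaviour of the cop number under uniform subdivision.

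\textbf{The geometric part.}
Fix a planar embedding of $G$ and a target parameter $r$, and place the original vertices far apart in the plane. At an original vertex $v$, of degree $d\le5$, place the first subdivision vertex of each incident edge at distance $r$ from $v$ along $d$ equally spaced rays; the angular gap $360^\circ/d\ge72^\circ$ exceeds $60^\circ$, so any two of these points are at distance $2r\sin36^\circ>r$ and are not adjacent. This is the only place $\Delta\le5$ enters: at degree $6$ the gap would be exactly $60^\circ$ and this estimate fails. Now route the remainder of each subdivided edge as a gently turning polygonal path with consecutive distances just below $r$, inside a corridor following the corresponding embedded edge; planarity lets the corridors be pairwise disjoint, keeps every corridor at distance $>r$ from the others and from all non‑incident vertex neighbourhoods, and lets each path avoid coming within $r$ of itself (coiling it gently where the embedded edge is short, so that every edge receives the same large number $\ell$ of subdivision vertices). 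By construction two vertices of the resulting drawing lie within distance $r$ exactly when they are consecutive along a subdivided edge, so the drawing is a geometric graph isomorphic to $G^{(\ell)}$, and, being drawn without crossings, it is planar.

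\textbf{Putting it together, and the main obstacle.}
The graph $H=G^{(\ell)}$ built above is a planar geometric graph, and by the first part $C(H)\in\{C(G),C(G)+1\}$, which is the theorem. For the stated consequence, take any planar graph $G$ with $\Delta\le5$ and $C(G)=3$ — for instance the dodecahedron graph, which is planar and $3$‑regular and has cop number $3$ by Proposition~\ref{prop: girth-cop-num.} and Theorem~\ref{thm: planar-3-cop-win}; then $C(H)\in\{3,4\}$, but $H$ is planar, so Theorem~\ref{thm: planar-3-cop-win} forces $C(H)=3$, and letting $\ell$ vary yields infinitely many pairwise non‑isomorphic such graphs. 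I expect the real work — and the main obstacle — to be the geometric realisation: juggling all the metric constraints at once (angular room at the degree‑$5$ vertices, mutual $>r$‑separation and self‑avoidance of the corridors, and the step‑length bounds along each subdivided edge) so that the unit‑distance graph of the point set is \emph{exactly} $G^{(\ell)}$ with nothing extra; the cop‑number bounds under subdivision are comparatively standard shadow/lifting arguments.
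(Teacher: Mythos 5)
Your proposal follows essentially the same route as the paper: a uniform subdivision $G^{(\ell)}$ whose cop number is controlled by the known subdivision lemma (Lemma~\ref{lemma: cop-subdivision}), combined with a geometric realisation that reroutes the edge endings at each vertex to obtain angular separation greater than $\pi/3$ (exactly where $\Delta\le 5$ enters, handled in the paper by Lemmas~\ref{lemma: hexagon01} and~\ref{lemma: hexagon02}) and coils each edge inside a corridor so that every edge receives the same number of subdivision vertices (the paper's Lemma~\ref{lemma: geo-planar} with its adjustable ``dents''). The obstacles you flag as the real work are precisely the ones the paper's technical lemmas carry out explicitly, so your approach is sound and matches the paper's.
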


\begin{thmx}\label{theorem: geoplanar2}
For every planar graph $G$ with maximum degree $\Delta\le 9$, there is a subdivision of $K(G)$ with cop number $\ge C(G)$.% graph obtain from $G$ by a number of clique substitutions that admits a subdivision into a geometric graph $S(G)$ satisfying $C(S(G)\ge C(G)$.
\end{thmx}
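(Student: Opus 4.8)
\medskip

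The plan is to separate the argument into a purely combinatorial part---that the clique substitution $K(\cdot)$ followed by a suitable edge subdivision cannot decrease the cop number---and a geometric-realization part, the latter being where the bound $\Delta\le 9$ is consumed.

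First I would pin down $K(G)$: each vertex $v$ of $G$ is replaced by a clique on the $d_G(v)$ edges incident with $v$ (its \emph{ports}), and for each edge $e=uv$ of $G$ one adds the single edge joining the port of $e$ inside the clique of $u$ to the port of $e$ inside the clique of $v$. Thus $\Delta(K(G))=\Delta(G)\le 9$, and contracting each clique to a point recovers $G$ (this contraction is a graph homomorphism but \emph{not} a retraction, since no single port of a clique sees ports of more than one neighbouring clique). Then I would fix a large integer $\ell$ and let $H$ be obtained from $K(G)$ by subdividing every port-to-port edge into a path of length $\ell$, leaving the clique edges intact; the precise value of $\ell$ is dictated by the geometric step below.

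To get $C(H)\ge C(G)$ I would use a shadow (strategy-lifting) argument: from any strategy by which the robber evades $k$ cops on $G$ I construct one by which the robber evades $k$ cops on $H$. The robber on $H$ carries a virtual position $v\in V(G)$ and physically occupies a port of the clique of $v$; the virtual cops are the images of the cops of $H$ under the map that sends a vertex lying in a clique to its vertex of $G$ and a vertex in the interior of a \emph{leg} (a subdivided port-to-port edge) to the nearer of the leg's two endpoints, and the robber replays its $G$-strategy against these virtual cops. Two features make the lift go through: each clique is complete, so between leg traversals the robber reaches whatever port it will need in one step; and every leg has length $\ell$, so the few turns spent inside a clique are flanked by long leg traversals during which no cop can draw near---this is exactly where $\ell$ must be large. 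The delicate point here is the synchronization bookkeeping: verifying that the induced play on $G$ is legal, and that the robber never shares a vertex with a cop while repositioning within a clique or while in transit along a leg.

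The step I expect to be the genuine obstacle is realizing $H$ (for a suitable $\ell$) as a geometric graph with some parameter $r$. I would place the clique of each vertex $v$ in a tiny disk, arranged so that the port of each incident edge has its own outward cone, well separated in angle from the directions toward the other ports; this is exactly where $\Delta\le 9$ enters, since only about that many legs fit around a small clique with enough angular room that the first vertex of a leg lands within distance $r$ of its own port yet farther than $r$ from every other vertex of the clique. Each leg is then drawn as a geometric path on $\ell$ internal vertices to its other endpoint; crossings are permitted (the graph need not be planar), but non-consecutive vertices of a leg, and vertices of distinct legs, must stay pairwise at distance $>r$ except at a shared clique. Producing one global layout in which all of these distance constraints hold simultaneously---a unit-disk realizability problem tailored to this family---is the technically demanding part, and it is the reason the theorem is stated only for $\Delta\le 9$.
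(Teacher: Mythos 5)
Your overall architecture matches the paper's: form $K(G)$, draw each knot inside a small disk, run the legs out in angularly separated directions, and use $\Delta\le 9$ precisely to guarantee that the first subdividing vertex of each leg is within distance $r$ of its own port but farther than $r$ from every other vertex of the knot (the paper quantifies this as needing $n<\pi/\sin^{-1}(1/3)\approx 9.2$, so $n\le 9$). But you leave both genuinely hard steps as acknowledged placeholders rather than resolving them. For the cop-number inequality, the paper does not run a bespoke lifting argument: it factors the claim through two quotable results of Joret, Kami\'nski and Theis (Lemmas \ref{lemma: cop-subdivision} and \ref{lemma: cop-clique}), namely that clique substitution never decreases the cop number and that replacing every edge by a path of a fixed length changes the cop number by at most $+1$, in particular never decreases it. Your shadow argument would have to re-derive both facts at once, and you explicitly flag the synchronization bookkeeping as the delicate point without carrying it out; note also that largeness of $\ell$ does no combinatorial work here, since both lemmas hold for every $\ell\ge 1$.

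The more substantive gap is geometric. The difficulty is not only angular separation around each knot: the legs of $K(G)$ have very different Euclidean lengths, yet they must all be subdivided into the \emph{same} number $\ell$ of steps of length at most $r$ for the subdivision lemma to apply. The paper's mechanism for this is Lemma \ref{lemma: geo-planar} --- a zig-zag curve inside a square whose dents can be individually shortened, so that every graph-length between $5k+1$ and $4k^2+6k+1$ is realizable as a geometric path over a fixed Euclidean span --- combined with rerouting the edge endings along the boundaries of nested regular $9$-gons (the analogue of Lemma \ref{lemma: hexagon02}) to eliminate small angles between edges leaving the same knot. Your proposal contains no analogue of either device: you fix ``a large integer $\ell$'' and assert each leg can be drawn as a geometric path with $\ell$ internal vertices, but without a length-equalization gadget and without a scheme for separating nearly parallel incident edges, the ``one global layout'' you defer to is exactly the content of the theorem.
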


Note that either result can be used to construct geometric graphs of cop number three. The construction provided in the proof of Theorem \ref{theorem: geoplanar1} is based on obtaining a polygonal-curve embedding from a given straight-line embedding of $G$ and then subdividing the edge-curve equally such that with an appropriate parameter for the geometric graphs, the resulting embedding is geometric. The latter, for example, requires that no subdividing vertex on an edge-polygonal curve be adjacent to a vertex belonging to another edge-polygonal curve. The latter, in particular, requires the angle between any two segments incident with a vertex of $G$ be greater than $\pi/3$. We use the same idea together with the operation of clique substitution for the construction in the proof of Theorem \ref{theorem: geoplanar2}. 

\begin{definition}[Clique Substitution]\cite{CDM154}
Let $G(V,E)$ be a graph and $v\in V(G)$. The {\em clique substitution at} $v$ is the graph obtain from $G$ by replacing $v$ with a clique of size $|N_G(v)|$ and matching vertices in $N_G(v)$ with the vertices of that clique. The {\em clique substitution of} $G$, denoted $K(G)$, is the graph obtained from $G$ by performing clique substitutions at all vertices of $G$. We refer to a clique substituted for a vertex of $G$ as a {\em knot} (of $K(G)$).
\end{definition}
\noindent Finally, utilizing the method of proof of Theorems \ref{theorem: geoplanar1} and \ref{theorem: geoplanar2}, we provide a representation of a graph on 450 vertices and cop number three as a geometric graph, which is substantially smaller than the 1440-vertex geometric graph of cop number three presented in \cite{Andrew}. More specifically, we show the following:

\begin{thmx}\label{thm: dodec }
The graph $G$ obtained by subdividing every edge of the dodecahedron into 15 edges has got cop number 3; moreover, it admits a geometric representation.
\end{thmx}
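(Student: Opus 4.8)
The plan is to handle the two assertions separately. The bound $C(G)\le 3$ is immediate: $G$ is a subdivision of the dodecahedron $D$, a subdivision of a planar graph is planar, and so Theorem~\ref{thm: planar-3-cop-win} applies. It therefore remains to prove $C(G)\ge 3$ and to exhibit a geometric representation of $G$, and the cop-number lower bound is the substantive point.

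For $C(G)\ge 3$ I would start from the fact that $D$ is $3$-regular with girth $5$, so by Proposition~\ref{prop: girth-cop-num.} two cops cannot win on $D$; moreover the witnessing evasion strategy is the local one behind that proposition (the robber at a vertex has three incident edges, and girth $5$ lets each cop rule out at most one of them, so a safe move always exists). I would lift this strategy to $G$, viewing $G$ as $D$ with every edge replaced by a corridor of length $15$. The robber keeps the invariant of standing at a branch vertex with all cops beyond a fixed threshold along each incident corridor; when a cop comes close it chooses, by the local argument applied to the cops that are close, a corridor $vw$ that is cop-free and whose far endpoint $w$ no cop can reach in time, and then sprints down $vw$ to $w$. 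Two features make $15$ subdivisions suffice: a corridor-traversal costs everyone the same $15$ rounds, so no cop off $vw$ can enter its interior before the robber leaves it (here one also uses that $G$ has girth $75$, which keeps distant cops out of range), and $15$ is long enough that a cop near $w$, or lurking inside $vw$, was already excluded by the choice of $vw$; hence the robber re-establishes the invariant at $w$ and evades forever. Equivalently, one could isolate a lemma stating that all sufficiently fine subdivisions of a graph of girth $\ge 5$ and minimum degree $\delta$ have cop number $\ge\delta$, and apply it with $\delta=3$ and $15$ subdivisions.

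The geometric representation is an application of the construction behind Theorem~\ref{theorem: geoplanar1}. Fix a straight-line planar drawing of $D$ in which every face angle at every vertex exceeds $\pi/3$; this is possible because the unique planar embedding of $D$ has twelve pentagonal faces, so one only needs to place the twenty points so that no pentagon is too sharp at any corner, i.e.\ so that the three angular gaps around each vertex are all larger than $\pi/3$. Then replace each straight edge by a polygonal curve routed inside a thin tube about the segment, arranging that the $30$ tubes are pairwise disjoint except at shared endpoints and that all $30$ curves have a common length $L$, and place on each curve its $14$ equally spaced subdivision vertices, so every elementary sub-segment has length $L/15$. Finally choose the parameter $r$ just above $L/15$ but below both the minimum separation of distinct tubes away from their shared endpoints and the pairwise distances of the three first subdivision vertices around each branch vertex; this last requirement is exactly what the $\pi/3$ condition buys (three points each within distance $r$ of the branch vertex but pairwise farther than $r$ apart). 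With these choices the only pairs of vertices at distance $\le r$ are the consecutive ones along the curves, so the drawing realizes $G$ as a geometric graph with parameter $r$.

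I expect the real obstacle to be the lower bound $C(G)\ge 3$: the general subdivision construction of Theorem~\ref{theorem: geoplanar1} only guarantees cop number $C(D)$ or $C(D)+1$, so one must argue directly that $15$ subdivisions prevent two cops from exploiting corridor interiors---positions with no analogue in $D$---to trap a robber of degree $2$, and that $15$ provides the lead time the lifted strategy needs. The geometric part is, by comparison, a direct though calculation-heavy instance of the method of Theorem~\ref{theorem: geoplanar1}, the only genuinely new point being to check that the chosen drawing of $D$, the number $15$, and the parameter $r$ can be made mutually consistent.
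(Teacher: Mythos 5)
Your decomposition of the theorem into $C(G)\le 3$, $C(G)\ge 3$, and the geometric drawing matches the paper, and the upper bound via planarity is exactly right, but you misjudge where the work lies on the lower bound. You write that the subdivision lemma ``only guarantees cop number $C(D)$ or $C(D)+1$'' and conclude that $C(G)\ge 3$ must therefore be argued by hand; in fact that conclusion is precisely what is needed. Lemma \ref{lemma: cop-subdivision} gives $C(G)\in\{C(D),C(D)+1\}=\{3,4\}$, hence $C(G)\ge 3$, and planarity of the subdivision (Theorem \ref{thm: planar-3-cop-win}) rules out the value $4$. That short sandwich is the paper's entire cop-number argument. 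Your replacement---lifting the girth-$5$ evasion strategy through corridors of length $15$---is not wrong in spirit (it is essentially how the lower-bound half of Lemma \ref{lemma: cop-subdivision} is proved), but as written it is incomplete: the claim that no cop off the chosen corridor $vw$ can intercept the robber at $w$ or inside $vw$ needs an actual distance computation (a cop can approach $w$ along the other corridors incident with $w$, and invoking ``girth $75$'' does not by itself quantify the required head start), and the re-establishment of the invariant when cops sit mid-corridor is asserted rather than checked. Since the packaged lemma is available and you cite it yourself, you should simply use it.

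For the geometric representation, your generic tube-and-equal-length construction follows the machinery of Theorem \ref{theorem: geoplanar1}, whereas the paper instead exhibits one concrete straight-line embedding of the dodecahedron, classifies the edges into five types, replaces each type by an explicit polygonal curve, and inserts $14$ vertices per curve with parameter $r=2$. Either route can work, but your sketch omits a condition the paper's construction enforces: consecutive segments \emph{within} a single edge-curve must also meet at angles greater than $\pi/3$ (or the curve must otherwise be kept from folding back on itself), since otherwise two subdivision vertices that are two steps apart along the same curve can fall within distance $r$ and create a chord, so the drawing would no longer realize $G$. Likewise, ``$14$ equally spaced vertices'' measured by arc length along a bent curve does not automatically yield $15$ sub-edges of Euclidean length $\le r$ with all non-consecutive pairs farther apart than $r$; this is exactly the bookkeeping carried out by Lemmas \ref{lemma: geo-planar} and \ref{lemma: hexagon02} in the general construction, and it must be verified for whatever drawing of the dodecahedron you fix.
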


% the authors provide a geometric graph on 1440 vertices with cop number three. Indeed, they represent a planar graph with girth five and minimum degree 3 as a geometric graph. That such a graph has cop number three simply follows from Theorem \ref{thm: girth-cop-num.}. In Section \ref{sec: 450-vertex-3copwin} we improve this result by providing a representation of a graph on 450 vertices and cops number three as a geometric graph (Theorem \ref{thm: dodec }).

\section{Proof of Theorem \ref{theorem: geoplanar1}}\label{sec: planageom1}	

Given a planar graph $G$ with $\Delta(G)\le 5$, we first identify it with any of its straight-line embeddings, which exist according to F{\'a}ry's Theorem \cite{Fary}. Then, if necessary, we replace the endings of edge-segments with a polygonal curve of at most five segments in such a way that in the resulting polygonal-curve planar graph the angle between any pair of consecutive edges at a vertex is greater than $\pi/3$ and in every edge-curve, both of the angles between any two consecutive segments are also greater than $\pi/3$. Finally, we shall show that in such a polygonal-edge embedding of $G$ all edge-curves can be subdivided into paths of some fixed length so that the resulting graph is geometric. Thus, we can use the following lemma to establish Theorem \ref{theorem: geoplanar1}.

\begin{lemma}\label{lemma: cop-subdivision}\cite{CDM154}
Let $G'$ be the subdivision of a graph $G$ obtained by replacing every edge of $G$ with a path of length $l$ for some fixed $l\in\mathbb{N}$. Then,
\begin{equation}
C(G')\in\{C(G),C(G)+1\}. 
\end{equation}
\end{lemma}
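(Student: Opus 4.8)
The plan is to prove the two inequalities $C(G)\le C(G')$ and $C(G')\le C(G)+1$ separately, using a single bookkeeping device that relates play on $G'$ to play on $G$. Call the vertices of $G'$ that come from $V(G)$ the \emph{branch vertices}, and for each edge $uv\in E(G)$ let $P_{uv}$ denote the length-$l$ path replacing it. The key idea is to group the $G'$-game into \emph{macro-rounds} of $l$ consecutive moves each: in $l$ moves an agent can traverse exactly one path $P_{uv}$, i.e.\ perform a single move of $G$. I would define a \emph{shadow} map sending each position of $G'$ to a position of $G$ --- a branch vertex maps to itself, and an interior vertex of $P_{uv}$ maps to the branch vertex most recently occupied (for the robber) or to the endpoint it is approaching (for a cop) --- so that one macro-round on $G'$ projects to a single legal round on $G$.

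For the upper bound $C(G')\le C(G)+1$, fix a winning strategy for $k=C(G)$ cops on $G$ and deploy $k$ \emph{primary} cops plus one \emph{sweeper} on $G'$. I would maintain the invariant that at the start of each macro-round every primary cop sits on a branch vertex, in the configuration dictated by the $G$-strategy applied to the shadow of the robber; during the macro-round each primary cop walks the length-$l$ path realizing its prescribed $G$-move. Because the $G$-strategy is winning, the shadows of the primary cops eventually corner the robber's shadow at a single branch vertex $b$ occupied by a primary cop. The robber is then confined to the $b$-side of some path $P_{bw}$, with the branch vertex $b$ blocked; the sweeper walks inward from $w$ and squeezes the robber against the primary cop, completing the capture.

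For the lower bound $C(G)\le C(G')$, I would argue contrapositively: a robber that evades $k-1$ cops on $G$ can evade $k-1$ cops on $G'$. The lifted robber lives on branch vertices and moves only at macro-round boundaries, traversing $P_{uv}$ precisely when its $G$-strategy, applied to the shadows of the cops, declares the move $u\to v$ safe. Since in $l$ moves each $G'$-cop advances by at most one $G$-move, the shadow cops play a legal $G$-game against which the lifted robber's moves are exactly its safe $G$-moves, so it is never captured at a branch vertex.

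The main obstacle is the timing/synchronization built into the shadow map, and in particular controlling the robber while it is in the interior of an edge-path. For the upper bound this is the reason the extra cop is needed: a single primary cop arriving at a branch vertex cannot by itself prevent the robber from slipping down an adjacent path, so the argument must show that the sweeper together with the blocking primary cop genuinely traps the robber on a length-$l$ segment. For the lower bound the delicate point is that, since all agents move at unit speed, a cop starting from the far endpoint $v$ could meet the traversing robber in the middle of $P_{uv}$; I must verify that the $G$-strategy's notion of a safe move $u\to v$ already rules out precisely those cop positions, so that no such mid-path interception is possible. Establishing that the shadow projection always yields legal, correctly-timed $G$-play --- across both the branch-vertex and interior cases --- is the crux of the proof.
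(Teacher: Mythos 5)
The paper never proves this lemma --- it is quoted directly from the cited reference [Joret--Kami\'nski--Theis], so your proposal can only be judged on its own merits. Its skeleton (macro-rounds of length $l$, a shadow projection onto $G$, $C(G)$ simulating cops plus one extra for the upper bound, lifting an evasion strategy for the lower bound) is the standard and correct architecture. But the two steps you yourself call ``the crux'' are not merely delicate: with your definitions they fail, and the missing ideas are exactly the content of the real proof.

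For the upper bound, shadow-capture does not confine the robber. When a primary cop reaches the robber's shadow $b$, the robber sits inside some path $P_{bw}$, but the end $w$ is wide open; the sweeper's location is never specified, and it cannot reach $w$ faster than the robber can exit through $w$. Once the robber exits, the projected $G$-game has already been ``won,'' the winning strategy issues no further instructions, and you have made no persistent progress --- the argument can loop forever. What is needed (and what you omit) is the observation that the shadow-capturing cop can become a permanent \emph{tail}: since cop and robber cannot swap along a subdivided edge, the robber can never again move past this cop, and one must then show how the remaining cops convert this irreversible gain into an actual capture (e.g.\ by re-running the simulation until the robber is sandwiched inside a single path between the tail and a cop at the branch vertex it is heading toward). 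For the lower bound, your cop-shadow map (``the endpoint it is approaching'') does not project to a legal $G$-game. A cop one step inside $P_{xu}$ facing $x$ has shadow $x$; within one macro-round it can turn, pass through $u$, and walk $l-1$ steps into $P_{uv}$, ending with shadow $v$ --- a jump of two $G$-edges (and $x,v$ need not even be adjacent), so the evasion strategy is fed an illegal cop move. Worse, a cop can place its shadow on the robber's own branch vertex merely by approaching it, so the projected game registers a capture with no real capture; the evasion strategy's invariant is destroyed, and your hope that ``safe $G$-moves already rule out mid-path interception'' has nothing to rest on. Repairing this requires a different projection (say, nearest branch vertex with consistent tie-breaking) together with an inductively maintained quantitative invariant --- at each macro-round boundary every cop is at $G'$-distance roughly at least $l$ from the robber --- and it is that distance invariant, not the bare notion of a safe move, which excludes interception. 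As written, both halves of the proof have a genuine hole at the decisive step.
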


\begin{lemma}\label{lemma: geo-planar}Let $A$ and $B$ be two points in the plane having Euclidean distance 1, and let $S$ be the square having  $AB$ as a diagonal. Then, given $k\in\mathbb{N}$, the parameter $r$ of geometric graphs can be set so that for each integer $l$ between  $5k+1$ and $4k^2+6k+1$ there exists a geometric path (i.e. a path drawn as a geometric graph) of length $l$ between $A$ and $B$ having no vertex outside of $S$.  
\end{lemma}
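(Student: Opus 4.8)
The plan is to realize the geometric path as a thin zig-zag polygonal curve confined to the square $S$, built out of straight segments all of the same length $s$, where $s$ will be chosen as a function of $r$ and $k$. The skeleton of the construction is a short polyline from $A$ to $B$ consisting of a bounded number of ``long'' segments (each of length close to $1/(5k)$ or so), laid out inside $S$; this uses at most $5k$ segments to connect $A$ and $B$ and stays inside $S$ because $S$ has side $1/\sqrt2$ and the polyline only needs to wander a little. To reach any prescribed number of vertices $l$ in the stated range, I would then (i) subdivide each of these skeleton segments into $t$ equal subsegments for a common integer $t$, giving exactly $5kt$-ish edges, and (ii) absorb the remaining ``slack'' between a multiple of the skeleton length and the target $l$ by inserting one detour: a narrow spike or accordion fold at a single vertex that adds any desired number of extra edges up to a quadratic bound, while keeping every edge the same Euclidean length and keeping all non-consecutive vertices at distance $>r$ from each other.

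First I would fix the relationship between the edge-length $s$ of the geometric path and the parameter $r$: set $r$ slightly larger than $s$ (say $r = s\sqrt{2}$, or whatever makes the zig-zag legal) so that consecutive vertices are adjacent but vertices two or more apart along the path are non-adjacent — this is what forces the curve to be ``thin'', i.e. any fold must have its parallel strands separated by more than $r$. Then I would show that with $s \approx 1/(4k^2+6k+1)$ one can always tile the straight segment $AB$ itself by $l$ equal pieces of length $1/l$ when $l$ is large (the upper end of the range), and at the lower end one uses a coarser subdivision plus a fold. The counting identity to check is just that $\{5k+1,\dots,4k^2+6k+1\}$ is covered: the lower bound $5k+1$ comes from needing at least one full ``unit'' of folding machinery, and the upper bound $4k^2+6k+1 = (2k+1)^2 + 2k$ is exactly the number of unit-length-$1/(2k+1)$-ish steps a maximally dense space-filling fold inside $S$ can accommodate — I would verify this by a direct area/packing estimate on how many parallel strands of separation $>r$ fit in a square of side $1/\sqrt2$, then multiply by the number of strands-per-column.

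The main obstacle I expect is not the existence of a thin folded path of roughly the right length, but hitting \emph{every} integer $l$ in the interval exactly, while simultaneously (a) keeping all edges exactly equal in Euclidean length (geometric graphs have no freedom here — equal combinatorial length forces equal geometric length only if we build it that way, so every segment really must be congruent), and (b) certifying the non-adjacency of all non-consecutive vertex pairs, including across different folds and between the fold and the skeleton. I would handle (a) by always working with a single global segment length $s_l = f(l)$ and re-deriving the layout for each $l$ rather than trying to modify one path into another; and I would handle (b) by making the folds ``locally supported'' — each fold lives in its own small sub-square disjoint from the others and from a neighborhood of the skeleton, so that the only adjacencies to check are within one fold, which reduces to a single clean inequality relating $s_l$, the fold's strand-separation, and $r$. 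The adjacency bookkeeping across the whole square is the part most likely to need a careful lemma rather than a one-line estimate, so I would isolate it as a sub-claim: \emph{a single zig-zag of $m$ segments of length $s$ with turning angle $\theta$ has all non-consecutive vertices at distance $> 2s\sin(\theta/2)$ apart}, and then simply demand $2s\sin(\theta/2) > r$.
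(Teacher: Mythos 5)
Your overall instinct---a folded polygonal path inside $S$ whose combinatorial length can be tuned---is the same as the paper's, but three things in your plan would break. First, you have imposed a false constraint: a geometric path with parameter $r$ only requires consecutive vertices at distance $\le r$ and non-consecutive ones at distance $>r$; the edges need not be congruent. The paper exploits this freely (its ``flat'' connector segments have length exactly $r_k$ while the subdivided slant segments are slightly shorter, and the one or two vertices inserted when a fold is truncated create edges of yet other lengths). Insisting on a single global edge length $s_l=f(l)$ and re-deriving the whole layout for each $l$ leads to your second, more serious problem: the lemma quantifies $r$ before $l$, so one parameter $r$, depending on $k$ alone, must serve every $l\in\{5k+1,\dots,4k^2+6k+1\}$ simultaneously (this is essential in the application, where all edges of $G$ are subdivided under one common parameter). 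Tying $r$ to $s_l$, as in ``$r=s\sqrt2$,'' gives a different parameter for each $l$ and proves a strictly weaker statement. Note also that your straight-line subdivision of $AB$ into $l$ equal pieces is geometric only when $1/l\le r<2/l$, a factor-of-two window, whereas the target range of $l$ spans a factor of roughly $k$.

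Third, your plan to absorb all of the slack (up to roughly $4k^2$ extra edges) in one ``narrow spike or accordion fold'' confined to a small sub-square disjoint from the skeleton cannot work metrically. A detour of $m$ extra edges, each of Euclidean length close to $r\approx\sqrt2/(4k)$, folded so that parallel strands stay more than $r$ apart, occupies area on the order of $mr^2$; for $m\sim 4k^2$ this is about $1/2$, i.e.\ the area of all of $S$, not of a small sub-square. The paper instead distributes the adjustability: it builds one fixed accordion $\gamma_k$ of maximal length $4k^2+6k+1$ that fills $S$ with $k$ ``dents,'' and shows that each dent can independently be truncated to any length from $2$ to $4k+3$ by deleting its far vertices and inserting one or two new vertices near the point where the two truncated strands would cross; the inserted vertices are automatically at distance greater than $r$ from every vertex outside that dent. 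Summing the $k$ independent adjustments is what hits every integer in $[5k+1,4k^2+6k+1]$ under a single fixed $r_k$, and that distributed mechanism is the piece missing from your proposal.
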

\begin{proof}\setcounter{claimCount}{0}Consider the Cartesian coordinate system where $A=(0,0)$ and $B=(1,0)$, and let $C=(1/2,1/2)$ and $D=(1/2,-1/2)$ be the other corners of $S$. %\textcolor{red}{Given $k\in\mathbb{N}$ pick $0<\alpha<\pi/4$ close enough to zero such that
%\begin{equation}\label{lemma: geo-planar01}
%\sin{\alpha}<\frac{1}{2(2k+1)^2}\quad \text{and}\quad \cos{\alpha}>\frac{4k+1}{4k+2},
%\end{equation}}
Given $k\in\mathbb{N}$ let
\begin{equation}\label{lemma: geo-planar01}
\alpha_k=\sin^{-1}\left( \frac{1}{2(2k+1)^2}\right) \qquad (0<\alpha_k<\pi/2).
\end{equation}
Observe that since sum of the squares of $(4k+1)/(4k+2) $ and $1/(2(2k+1)^2)$ is less than one, we have $\cos{\alpha_k}>(4k+1)/(4k+2)$ and, hence, $(2k+1)\tan{\alpha_k}<1/(4k+1)$. We let the parameter of geometric graphs be $r_k$ given by
\begin{equation}\label{lemma: geo-planar02}
r_k=\frac{\sqrt{2}}{4k}\left(1-(2k+1)\tan{\alpha_k} \right).
\end{equation}
As such, we will have
\begin{equation}\label{lemma: geo-planar02+}
r_k>\frac{\sqrt{2}}{4k+1}.
\end{equation}
Moreover, we set the vectors
\begin{alignat}{3}\label{lemma: geo-planar03}
\vec{X_k}&=\langle \cos{(\pi/4-\alpha_k)},\sin(\pi/4-\alpha_k)\rangle,\\ %\label{lemma: geo-planar04}
\vec{Y_k}&=-\langle \cos{(\pi/4+\alpha_k)},\sin(\pi/4+\alpha_k)\rangle,
\end{alignat}
\begin{figure}[h]
\begin{center}
 \includegraphics[scale=0.32]{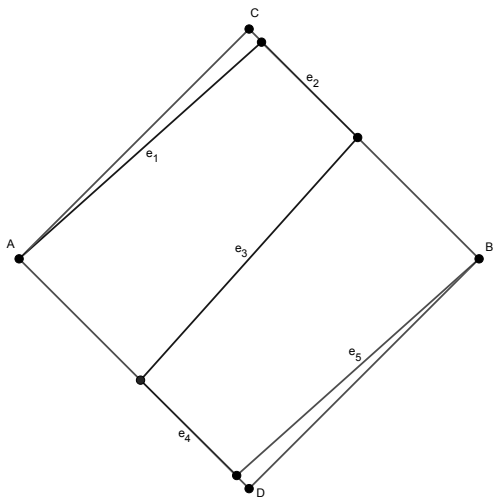}
 \caption{The polygonal curve $\gamma_1$ consisting of five line segments: $e_1$ and $e_5$ parallel to $X_1$, $e_3$ parallel to $Y_1$, and $e_2$ and $e_4$ of length $r_1$.}
 \label{pic:geoplan-first}
\end{center}
\end{figure}
\begin{figure}[h]
\begin{center}
 \includegraphics[scale=0.32]{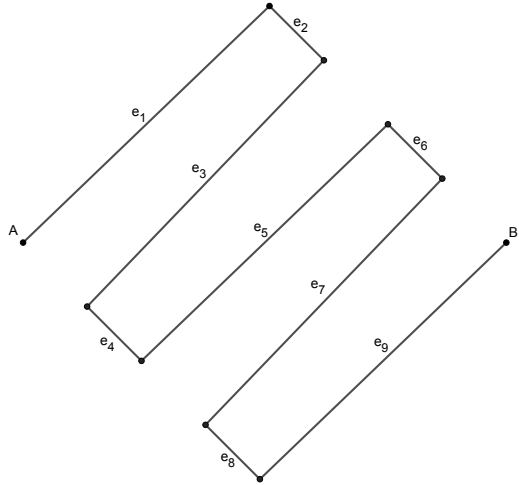}
 \caption{The polygonal curve $\gamma_2$ consisting of nine line segments: $X_2$-segments $e_1$, $e_5$, and $e_9$, $Y_2$-segments $e_3$ and $e_7$, and flat-segments $e_2$, $e_4$, $e_6$ and $e_8$ of length $r_2$.}
 \label{pic:geoplan-second}
\end{center}
\end{figure}
and let $\gamma_k$  be the polygonal curve from $A$ to $B$ consisting of $k+1$ line-segments parallel to $\vec{X_k}$, called {\em $\vec{X_k}$-segments}, each from a point on $AD$ to a point on $BC$, $k$ line-segments parallel to $\vec{Y_k}$, called {\em $\vec{Y_k}$-segments}, each from a point on $BC$ to a point on $AD$,  and $2k$ line-segments of length $r_k$ parallel to the directed line-segment from $B$ to $C$, called {\em flat segments}, such that the initial and terminal segments of $\gamma_k$ are $\vec{X_k}$-segments, and each of the first $k$ $\vec{X_k}$-segments in $\gamma_k$ is followed by exactly one flat segment which itself is followed by a $\vec{Y_k}$-segment, and, likewise, each of the $Y_k$-segments in $\gamma_k$ is followed by exactly one flat segment which is itself followed by an $\vec{X_k}$-segment- see Figures \ref{pic:geoplan-first} and \ref{pic:geoplan-second} for examples. We call any of the $k$ three-segment subcurves of $\gamma_k$ starting with an $X_k$-segment a {\em dent} of $\gamma_k$. By \eqref{lemma: geo-planar02}, we have $2rk<\sqrt{2}/{2}$. Moreover, with $l$ being the common length of $X_k$-segments and $Y_k$-segments (which can reasonably be referred to as {\em slant} segments) we have $l=\sqrt{2}/(2\cos{\alpha_k})<(2k+1)\sqrt{2}/(4k+1)$. Hence, according to \eqref{lemma: geo-planar02+}, we obtain
\begin{equation}\label{lemma: geo-planar05}
2r_k k< \frac{\sqrt{2}}{2}<l<(2k+1)r_k.
\end{equation}
Consequently, with $r_k$ as the parameter of geometric graphs, each of the slant segments of $\gamma_k$ can be subdivided into a geometric path of length $2k+1$. Thereby, as $\gamma_k$ consists of $2k$ flat segments of length $r_k$ and $2k+1$ slant segments, it can be subdivided into a geometric path of length $(2k+1)^2+2k=4k^2+6k+1$. We shall denote such a geometric path also by $\gamma_k$. Hence, to complete the proof it suffices to show the following:

\begin{claim} Given any dent $D$ of $\gamma_k$ and for each $s\in[2\dcdot (4k+2)]$, one can delete some vertices of $D$ and then introduce one or two new vertices inside $D$ to make $D$ into a geometric path of length $s$. Moreover, such a change can be made into each of the dents of $\gamma_k$ so that the entire resulting path will stay geometric. In particular, any integer between $5k+1$ and $4k^2+6k+1$ can be attained as the length of a geometric path between $A$ and $B$ having no vertex outside of $S$. 
\end{claim}
\begin{claimproof} Let $p_0,\dots,p_{2k+1},q_{2k+1},q_{2k},\dots,q_0$ be the sequence of vertices in $D$ and let $s=2t+1$ or $s=2t+2$ for some $t\in[0\dcdot k]$. Furthermore, let $w$ be the intersection of the segments $p_tq_{t+1}$ and $q_t p_{t+1}$ and pick points $p'$ and $q'$ on the segments $p_{t+1}w$ and  $q_{t+1}w$ such that $d_E(p_t,q')=d_E(q_t,p')> r_k$ and $d_E(p',q')\le r_k$ (Fig. \ref{pic:geoplan01}). Observe that the paths with sequence of vertices $p_0,\dots,p_t,w,q_t,\dots,q_0$ and $p_0,\dots,p_t,p',q',q_t,\dots,q_0$ are geometric. Hence, to obtain the desired length $s$ it suffices to remove all vertices $p_i,q_i$ with $i>t$ from $D$, and according as $s=2t+1$ or $s=2t+2$, add $w$ or both $p'$ and $q'$. Note that the newly added vertices will be at a distance greater than $r_k$ from vertices of $\gamma_k$ which are not in $D$. Hence, all of the dents of $\gamma_k$ can be adjusted this way while keeping the resulting path geometric. Since the last $Y_k$-segment of $\gamma_k$ and $k$ of the flat segments of $\gamma_k$ are in no dent and any dent can be reduced to a geometric path of any length between $2$ and $4k+2$ or kept at the original length of $4k+3$, $\gamma_k$ can be adjusted to a geometric path of any length between $2k+(2k+1)+k=5k+1$ and $4k^2+6k+1$.
\begin{figure}[H]
\begin{center}
 \includegraphics[scale=0.75]{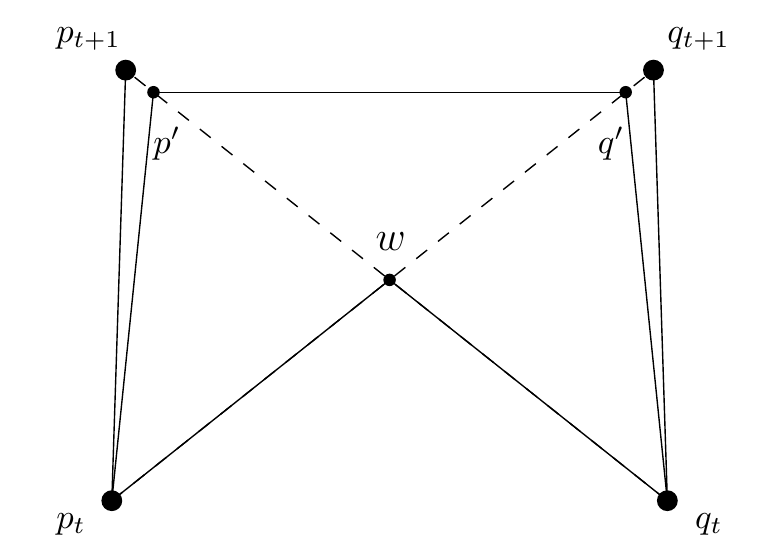}
 \caption{Adjusting the length of a dent}
 \label{pic:geoplan01}
\end{center}
\end{figure}
\end{claimproof}
\end{proof}
 %     flat segments  with and according as $s=2t$ or $s=

%, $and choose  circles of radius $r$ with centers $ Consider the arcs $C_1$+1} or $ \in[2\dcdot (4k+2)]$ is even, by deleting all vertices $p_i$, $q_i$ with $i\in[s/2\dcdot (2k+1)]$ and rotating $p_{s/2}$ and $q_{s/2}$ 
%\textcolor{red}{add the proof with some pictures of a modified dent.\\[300pt]}

%Let $P$ be such a geometric path.
 %On the other hand, one can subdivide $AC$into $\lceil 1/r \rceil=\lceil 2k\sqrt{2} \rceil$. In the rest of the proof we shall show that any integer $\lambda$ between $\lceil 2k\sqrt{2} \rceil$ and 
%$4k^2+6k+1$ can also be attained as the length of a geometric path between $A$ and $B$ with no vertex outside $S$.
\
%\textcolor{red}{doable, but technical}\\[100pt]
%\lambda \gammaand $2k$ flat segments in $\gamma$, $\gamma$ can be subdivided into a geometric path of length  Moreover, as $\alpha>0$
%Choose $r$ as the threshold for geometric graphs. Being slightly longer than the side length of $S$, each of the $a-$ and $b$-segments can be subdivided into $2k+1$ sub-segments $2k$ of which are of length $r$ and one in the middle of the subdivided line-segment is shorter than $r$. %After making the subdivisions, we obtain a geometric path of (graph) length $(2k+1)(2k+2)$. One the other hand, the diagonal $AB$ of $S$ can be subdivided into a geometric path of (graph) length $\lceil 2k\sqrt{2}\rceil$.\\

%\textbf{Adjustment of the long curve to obtain a geometric path of any fixed $l$},

% remaining one,among which only one in the middle has its length less than $r$ (and is slightly longer than the side length of $S$ and can be subdivided into $2k+1$ slightly longer  .to $B$ describestarting by an upward segment parallel to $\vect{u}$  and extending to the o followed byconsisting of $k+1$ upward segments  parallel to $\vect{u}$ extending from one side of $S$ to its opposite side, and $2k$ downward segments parallel to $v$ also extending from one side of $S$ to its opposite side, together with $2k$ segments along  and $k$ segments anti-parallel to $\vect{v}$, each set going from a side of $S$ to its opposite side, and    the line segments from starting from $A$
 
%\end{proof}
\noindent The following two technical lemmas will serve to justify that the proposed alterations of the terminal parts of edges of $G$ in the proof of Theorem \ref{theorem: geoplanar1} will keep the graph geometric without adding too many segments.\\
%{\Large \textcolor{red}{Up To Here}}\\
\begin{lemma}\label{lemma: hexagon01}
Let $O$ be a vertex of degree at most five in a straight-line plane graph $G$. Let $\mathscr{H}$ be the collection of all sets $H$ of six distinct rays emanating from $O$ such that the angle between any consecutive pair of rays in $H$ is $\pi/3$. For every $H\in\mathscr{H}$ let $\sigma(H)$ be the number of edges of $G$ incident with $O$ making an angle $\le\pi/37$ with a ray in $H$. Then $\min\{\sigma(H): H\in\mathscr{H}\}=0$.

\end{lemma}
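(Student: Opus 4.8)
The plan is a pigeonhole (measure) argument on the circle of ray-directions read modulo $\pi/3$. First I would parametrize $\mathscr{H}$. Since the six rays of any $H\in\mathscr{H}$ are equally spaced with consecutive gap $\pi/3$, the set $H$ is determined by the direction of any single one of its rays read modulo $\pi/3$ — all six ray-directions of $H$ coincide modulo $\pi/3$, because $\pi$, hence $2\pi$, is an integer multiple of $\pi/3$. Writing $\mathbb{T}$ for the circle $\mathbb{R}/(\tfrac{\pi}{3}\mathbb{Z})$, of circumference $\pi/3$, this gives a bijection $\mathbb{T}\to\mathscr{H}$, $\theta\mapsto H_\theta$, where $H_\theta$ consists of the rays from $O$ in the directions $\theta,\theta+\tfrac{\pi}{3},\dots,\theta+\tfrac{5\pi}{3}$.

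Second, I would rewrite $\sigma(H_\theta)$ as a covering count on $\mathbb{T}$. Each edge $e$ of $G$ incident with $O$ emanates from $O$ in a well-defined direction $\phi_e$; let $\bar\phi_e\in\mathbb{T}$ denote its class modulo $\pi/3$. The six ray-directions of $H_\theta$ are precisely the fibre over $\theta$ of the natural $6$-fold covering $\mathbb{R}/2\pi\mathbb{Z}\to\mathbb{T}$ (this is again where $2\pi=6\cdot\tfrac{\pi}{3}$ enters), and that covering is a local isometry, so the least angle between $e$ and a ray of $H_\theta$ equals the arclength distance on $\mathbb{T}$ between $\bar\phi_e$ and $\theta$. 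Hence $e$ makes an angle $\le\pi/37$ with some ray of $H_\theta$ if and only if $\theta$ lies in the closed arc $I_e\subseteq\mathbb{T}$ of length $\tfrac{2\pi}{37}$ centred at $\bar\phi_e$; moreover, since $2\cdot\tfrac{\pi}{37}<\tfrac{\pi}{3}$, no edge can be that close to two distinct rays of $H_\theta$. Thus $\sigma(H_\theta)$ is exactly the number of edges $e$ at $O$ with $\theta\in I_e$.

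Third, the conclusion follows by counting lengths. As $O$ has degree at most five, there are at most five arcs $I_e$, of total length at most $5\cdot\tfrac{2\pi}{37}=\tfrac{10\pi}{37}<\tfrac{\pi}{3}$ (equivalently $30<37$), so $\bigcup_e I_e\neq\mathbb{T}$. Picking $\theta^{*}\in\mathbb{T}\setminus\bigcup_e I_e$ gives $\sigma(H_{\theta^{*}})=0$, and since $\sigma$ is nonnegative on $\mathscr{H}$, this yields $\min\{\sigma(H):H\in\mathscr{H}\}=0$.

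I do not expect a genuine obstacle. The only point requiring care is the descent in the second step: verifying that the condition "angle $\le\pi/37$ with a ray of $H_\theta$" is the same as "$\theta$ lies in a single arc of length $2\pi/37$ on $\mathbb{T}$", which is precisely where $2\pi/37<\pi/3$ is used; after that the statement is just the elementary fact that five closed arcs of total length less than the circumference cannot cover the circle. (Any constant strictly smaller than $\pi/30$ in place of $\pi/37$ would suffice for this lemma alone.)
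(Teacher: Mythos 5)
Your proof is correct, but it takes a genuinely different route from the paper. The paper argues by bounding $x=\min\{\sigma(H):H\in\mathscr{H}\}$ from above: it first observes that a suitable rotation of any $H$ yields $H'$ with $\sigma(H')\le 5-\sigma(H)$, so $x\le 2$, and then excludes $x=2$ and $x=1$ separately by examining the ten rotations of an optimal $L$ by multiples of $\pi/18$ and invoking $\deg(O)\le 5$ — a discrete case analysis whose individual steps are only sketched. You instead reduce everything to a single covering argument on the circle $\mathbb{R}/(\tfrac{\pi}{3}\mathbb{Z})$: each of the at most five edges forbids one closed arc of length $\tfrac{2\pi}{37}$, and $5\cdot\tfrac{2\pi}{37}=\tfrac{10\pi}{37}<\tfrac{\pi}{3}$, so some admissible $\theta^{*}$ survives. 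The key identification — that the least angle from an edge to the six rays of $H_\theta$ equals the arclength distance from $\bar\phi_e$ to $\theta$ modulo $\pi/3$ — is justified correctly, since the six ray directions are exactly the lifts of $\theta$ under the $6$-fold covering and any direction lies within $\pi/6$ of one of them. Your version buys a shorter, fully rigorous proof with no case distinctions, and it makes the true threshold transparent (any bound strictly below $\pi/30$ works), whereas the paper's rotation scheme is tied to the specific constants $\pi/18$ and $\pi/37$ and leaves the verification of each rotation step to the reader.
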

\begin{proof}
Let $x= \min\{\sigma(H): H\in\mathscr{H}\}$. Note that one can rotate the rays of any $H\in\mathscr{H}$ to obtain some $H'\in \mathscr{H}$ satisfying $\sigma(H')\le 5-\sigma(H)$; hence, $x\le 2$. Consider some $L\in\mathscr{H}$ with $\sigma(L)=x$, and set $\alpha=\pi/18$. %and $\beta=5\alpha=5\pi/ 18$.
Note that if an edge incident with $O$ makes an angle $\le \pi/37$ with a ray in $L$, then any of the 10 rotations of $H$ about $O$ by $\pm\alpha$, $\pm2\alpha, \pm3\alpha$, $\pm4 \alpha $, and $\pm5\alpha$ puts that edge in an angular distance greater than $\pi/37$ from any ray in $L$. According to this observation, 
\begin{itemize}
\item we have $x\not=2$, for otherwise rotating the rays of $L$ by $2\alpha$ would leave no more than one edge incident with $O$ within angular distance of $\pi/37$ to a ray of $H$, a contradiction; and
\item we also have $x\not= 1$, for otherwise if one applies five consecutive rotations of all rays in $L$ about $O$ by $\pi/18$, after each of the rotations at least one new edge has to be placed within angular distance of $\pi/37$ to a ray in $L$, requiring $deg(O)\ge 6$, a contradiction.
% and, hence, performing the last rotations leaves no edge incident with $O$ in such an angular distance to any ray in $L$.
% We establish this fact using the observation that if $e$ is an edge of $G$ incident with $O$ which is within an angular distance of $\pi/37$ to a ray in some $H\in \mathscr{H}$, rotating $H$ by any angle between $2\pi/37$ and $\pi/3-2\pi/37$ puts $e$ in an angular distance $>\pi/37$ to any ray in $H$. According to our observation, if $\sigma(L)=1$ and one applies five consecutive rotations of  $L$ about $O$ by $\alpha=\pi/18$, after each of the the rotations at least one new edge has to be placed within angular distance of $\pi/37$ to a ray in $L$, a contradiction with $deg_v(G)\le 5$.
%To see this suppose, on the contrary, that . Observe that . Therefore,  contradiction $deg_v(G)\le 5$.% hence, performing the last rotations leaves no edge incident with $O$ in such an angular distance to any ray in $L$.

%after each of five consecutive rotations of all rays in $L$ about $O$ by $\alpha$, one new edge must be put in an angular disby $\alpha$ all of the edges incident with $O$ would be put be in an angular distance greater than or equal to $\pi/36$ to any ray in $L$, again a contradiction. 
\end{itemize}
Hence, $x=0$.
\end{proof}

\begin{lemma}\label{lemma: hexagon02}
With the assumptions and notation of Lemma \ref{lemma: hexagon01}, let $L\in\mathscr{H}$ such that $\sigma(L)=0$; and let $H_1$ and $H_2$ be distinct regular hexagons with center $O$ and side length $l_1$ and $l_2$ (with $l_1>l_2$) and corners on the rays of $L$. Then for every edge $e$ incident with $O$ one can pick $\pi(e)\in\{1,2\}$ and replace the segment of $e$ inside $H_{\pi(e)}$ with a simple polygonal curve $\alpha_e$ consisting of at most four line segments: a line segment $C_e$ that connects $O$ to a point $Q_e$ on the boundary of $H_{\pi(e)}$, together with a connected portion of the boundary of $H_{\pi(e)}$ comprising at most three line-segments, in such a way that the following hold (see Figure \ref{pic:geoplan010} and Figure \ref{pic:geoplan011}):
\begin{enumerate}
\item Every segment of each $\alpha_e$ is longer than
\begin{equation}\label{lemma: hexagon02-01}
s_{\pi(e)}:=\left(\frac{1}{2}-\frac{\sqrt{3}}{2}\tan(31\pi/222)\right)l_{\pi(e)} (> 0.093 l_{\pi(e)}).
\end{equation}%\textcolor{red}{No endpoint of a segment of $\gamma_e$ has an angular distance $\le \pi/36$ from any ray in $L$.}NOTE:INSTEAD OF THIS WE NEED A LOWER BOUND FOR THE SIZE OF SEGMENTS: NOSEGMENT SHORTER THEN $\pi/36*...a\; factor.of.the shorter\; of\;side\; lengths of H_1\; and H_2$
\item After replacing the ending of each $e$ with $\alpha_e$, no two consecutive segments on the polygonal curve representing $e$ have an angle $\le \pi/3$.
\item For every pair $e,e'$ of distinct edges incident with $O$ both angles (clockwise and counterclockwise) between $C_e$ and $C_{e'}$ are greater than $\pi/3$. Moreover, the minimum distance between $\alpha_e$ and $\alpha_{e'}$ is at least $\min\{(\sqrt{3}/2)(l_1-l_2),l_2,(\sqrt{3}/2)s_1,\delta\}$ where $s_1$ is given by \eqref{lemma: hexagon02-01}, and $\delta$ is the minimum distance between $e_1,\dots,e_5$ outside the smaller hexagon $H_2$.
% and such angles can be equal to $\pi/3$ only where $\gamma_e$ passes through a corner of $H_{\pi(e)}$.
%\item One can introduce arbitrarily small changes in the position of all corners in $\gamma_e$s that originally coincide with a corner of $H_1$ or $H_2$, such that properties (a), (b), and (c) are preserved and there is no angle equal to $\pi/3$ between two consecutive segments on any $\gamma_e$.
\end{enumerate}
\end{lemma}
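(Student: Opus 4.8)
The plan is to construct the curves $\alpha_e$ explicitly and then verify the three itemised conditions; the only non-mechanical ingredient is the simultaneous choice of where the segments $C_e$ leave $O$, which I would settle last. First I would fix the geometry. Apply Lemma~\ref{lemma: hexagon01} to choose $L$ with $\sigma(L)=0$, so every edge $e$ at $O$ meets each ray of $L$ at angle $>\pi/37$. The one consequence I need is: for $j\in\{1,2\}$, the point $P_e^{(j)}$ where $e$ crosses $\partial H_j$ lies in the relative interior of its side, at distance $>s_j$ from either corner. This is the identity $31\pi/222=\pi/6-\pi/37$ in disguise: a side of a regular hexagon of circumradius $l_j$ subtends $2\cdot\pi/6$ at $O$; a point at signed distance $t$ from the side's midpoint is seen from $O$ in direction $\arctan\big(t/(\tfrac{\sqrt3}{2}l_j)\big)$; and being more than $\pi/37$ from a corner ray forces $|t|<(\tfrac{\sqrt3}{2}l_j)\tan(\pi/6-\pi/37)$, i.e.\ the distance from $P_e^{(j)}$ to the nearer corner exceeds $\big(\tfrac12-\tfrac{\sqrt3}{2}\tan(31\pi/222)\big)l_j=s_j$. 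I would also record the mirror statement, used for the new vertices: a point of a side whose direction from $O$ avoids a given corner ray by more than $\pi/37$ is at distance $>s_j$ from that corner.

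Next I would set up the combinatorial frame. List the edges at $O$ as $e_1,\dots,e_d$ ($d\le5$) in rotation-system order; since $d<6$, some sector of $L$ is edge-free, so the cyclic order of the edges has a gap exceeding $\pi/3$, and I relabel so this gap lies between $e_d$ and $e_1$. Set $\pi(e_i)=1$ for $i$ odd and $\pi(e_i)=2$ for $i$ even, so that consecutive curves get built along different hexagon boundaries; the only two consecutive curves this alternation still puts on the same hexagon are $(e_d,e_1)$ when $d$ is odd, and those are separated by the wide gap. This alternation, together with respecting the cyclic order below, is what keeps the family of $\alpha_e$'s non-crossing.

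Now I would build $\alpha_{e_i}$: choose a side $S_i'$ of $H_{\pi(e_i)}$ at most two steps from the side $S_{e_i}$ that $e_i$ crosses, and a non-corner point $Q_{e_i}$ on $S_i'$, and let $\alpha_{e_i}$ be the segment $C_{e_i}=OQ_{e_i}$ followed by the arc of $\partial H_{\pi(e_i)}$ from $Q_{e_i}$ to $P_{e_i}$; since $S_i'$ is within two sides of $S_{e_i}$, this arc has at most three segments, so $\alpha_{e_i}$ has at most four. I would impose: (a) the directions of the $C_{e_i}$ are pairwise more than $\pi/3$ apart and occur in the cyclic order $e_1,\dots,e_d$; (b) the direction of $Q_{e_i}$ avoids by more than $\pi/37$ the corner of $S_i'$ toward which the arc first runs (and, when $S_i'=S_{e_i}$, $Q_{e_i}$ is far enough from $P_{e_i}$ that $|Q_{e_i}P_{e_i}|>s_{\pi(e_i)}$). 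Granting (a)--(b), conditions 1 and 2 are routine: $|C_{e_i}|\ge\tfrac{\sqrt3}{2}l_{\pi(e_i)}>s_{\pi(e_i)}$; the interior segments of the arc are whole sides of length $l_{\pi(e_i)}$; its first segment is $>s_{\pi(e_i)}$ by the mirror statement and its last by $P_{e_i}$ lying $>s_{\pi(e_i)}$ from the corners; every turn inside the arc is the hexagon's exterior angle $\pi/3$ (path-angle $2\pi/3$); the angle from $C_{e_i}$ into the next segment is $>\pi/3$ because $Q_{e_i}$ is an interior point of its side (it equals $\tfrac{2\pi}{3}$ minus the angle subtended at $O$ by $Q_{e_i}$ and the corner the arc runs to, and is even larger when $S_i'=S_{e_i}$); and at $P_{e_i}$ the angle between the arc and the unchanged continuation of $e_i$ is $\tfrac{\pi}{2}\pm\phi$ with $|\phi|<31\pi/222$, hence again $>\pi/3$.

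The angular half of condition 3 is (a). For the distance bound I would split into cases: if $\pi(e)\ne\pi(e')$, the two arcs lie on $\partial H_1$ and $\partial H_2$, at distance $\tfrac{\sqrt3}{2}(l_1-l_2)$, while the parts of $\alpha_e,\alpha_{e'}$ that have left $H_2$ stay apart because rays from $O$ more than $\pi/3$ apart are $\ge r$ apart at radius $r$ (giving the $l_2$ term); if $\pi(e)=\pi(e')$, $C_e$ and $C_{e'}$ diverge by more than $\pi/3$, their arcs either lie on well-separated sides or funnel toward $P_e,P_{e'}$, which are $\ge\delta$ apart because outside $H_2$ the curves coincide with the original edges near their ends, and the only remaining local picture --- an arc skirting a $Q$ on a neighbouring side --- supplies the $\tfrac{\sqrt3}{2}s_1$ term; finally each $\alpha_e$ is simple, and the family is non-crossing by the alternation and (a). I expect the genuine obstacle to be carrying out (a)--(b) \emph{simultaneously}: when several of the $\le5$ edges crowd into one sector (worst case, all five) the sides reachable within two steps of $S_e$ span only $\tfrac{5\pi}{3}$, their usable parts leave only about $\pi/3$ of total slack, the chosen directions must still emerge in the prescribed cyclic order, and --- since two edges in adjacent sectors can be as little as $2\pi/37$ apart --- one cannot simply leave well-separated edges un-rerouted. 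Exhibiting a legal assignment in every case is a finite but fussy packing argument, and that is where the constants $\pi/37$, $31\pi/222$ and the two nested hexagons are being spent; everything else is trigonometry and a planarity check.
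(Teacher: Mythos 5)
Your setup, your decoding of the constants ($31\pi/222=\pi/6-\pi/37$, hence every crossing point $C_{i,j}$ lies at distance $>s_j$ from the nearest corner of its side), your verification of conditions (a) and (b), and your case split for the distance bound in (c) all match the paper's intent. But the one step you explicitly defer --- ``exhibiting a legal assignment in every case is a finite but fussy packing argument'' --- is not a loose end of the lemma; it \emph{is} the paper's proof. Everything else is, as you say, routine trigonometry, so a proof that stops short of the simultaneous choice of the $Q_e$'s and winding directions has not proved the lemma. Concretely, in the worst case (all five edges crossing the single side $A_{1,j}A_{2,j}$), your self-imposed restriction that $S_i'$ be ``at most two steps from $S_{e_i}$'' is exactly what makes the packing feel tight; the paper does not restrict itself this way in spirit so much as it simply fans the five directions out to (near-)corner directions spread over essentially the whole circle: it sets $\pi=(1,2,1,2,1)$, sends $e_3$ straight through ($Q_{e_3}=C_{3,1}$), sends $e_1,e_2$ counterclockwise around $H_1,H_2$ to $A_{5,1}$ and to a point at angle $\pi/37$ past $A_{6,2}$, and sends $e_4,e_5$ clockwise around $H_2,H_1$ to points at angle $\pi/37$ past $A_{3,2}$ and $\pi/38$ past $A_{4,1}$. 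The resulting $C_e$-directions sit near five of the six corner rays plus the occupied sector, pairwise more than $\pi/3$ apart, and the opposite winding of the two ``wings'' together with the alternation of hexagons is what keeps the arcs disjoint. In the opposite extreme (one edge per sector) the paper uses only $H_1$, winds every $\alpha_i$ clockwise, and staggers the $Q_i$ at distances $a(i-1)/5$ from the corners to keep the radial segments separated.

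Two further cautions. First, your cyclic-order requirement in (a) should be read up to cyclic rotation: in the paper's Case I the directions come out in the order $e_3,e_4,e_5,e_1,e_2$, not $e_1,\dots,e_5$, and that is fine precisely because $e_1,e_2$ wind backwards on hexagons different from (or outside of) those carrying $e_3,e_4,e_5$. Second, be aware that the paper itself only writes out these two extreme configurations and asserts the intermediate ones are ``similar''; so if you do carry out the packing argument uniformly (e.g., by proving that $d\le 5$ directions, each constrained to an arc of usable boundary of total angular measure $>5\pi/3$, can always be chosen pairwise $>\pi/3$ apart in an order compatible with a non-crossing routing), you would actually be supplying detail the paper omits rather than merely reproducing it.
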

\begin{proof}
For each $i\in\{1,2\}$ let the $A_{k,i}$ ($k=1,\dots,6$) be the corners of $H_i$ , say, clockwise around $O$ such that for every $k$, $A_{k,1}$ and $A_{k,2}$ belong to the same ray of $L$. We shall establish the lemma in the following extreme cases; the other cases can be deals with in a similar fashion.
\begin{description}
\item[Case I:] $\deg(O)=5$ and all edges incident with $O$ are between two consecutive rays in $L$; in other words, all such edges intersect the same side of $H_j$ $(j=1,2)$.
\item[Case II:] $\deg(O)=5$ and no two consecutive rays in $L$ enclose two of the edges incident with $O$; in other words, no to edges incident with $O$ cross the same side of $H_j$ ($j=1,2$).
\end{description}
Let $e_1,\dots,e_5$ be the edges incident with $O$ in the clockwise order around $O$, and for each $i\in[1\dcdot 5]$ and $j\in[1\dcdot 2]$ let $C_{i,j}$ be the intersection of $e_i$ with the boundary of $H_j$. We also denote every $Q_{e_i}$ simply with $Q_i$.\\

\begin{figure}[h]
\begin{center}
 \includegraphics[scale=0.35]{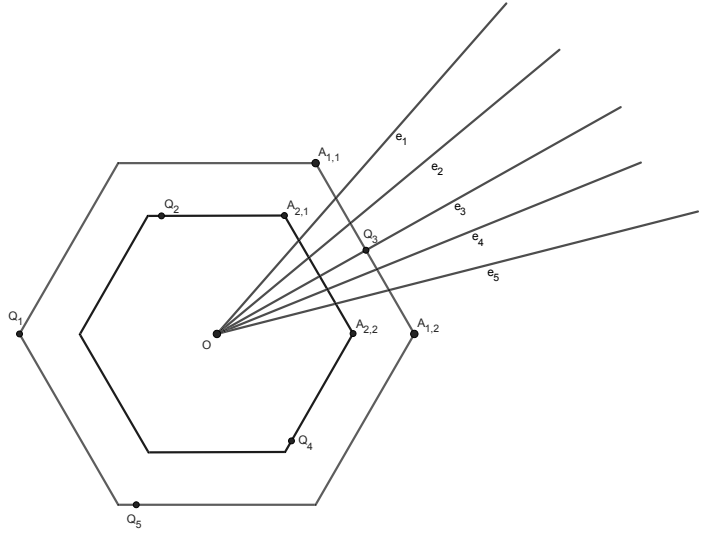}
 \caption{Case I: $\deg_G(O)=5$ and all edges incident with $O$ intersect the sides $A_{1,1}A_{1,2}$ of $H_1$ and $A_{2,1}A_{2,2}$ of $H_2$}
 \label{pic:geoplan010}
\end{center}
\end{figure}
\begin{figure}[h]
\begin{center}
 \includegraphics[scale=0.35]{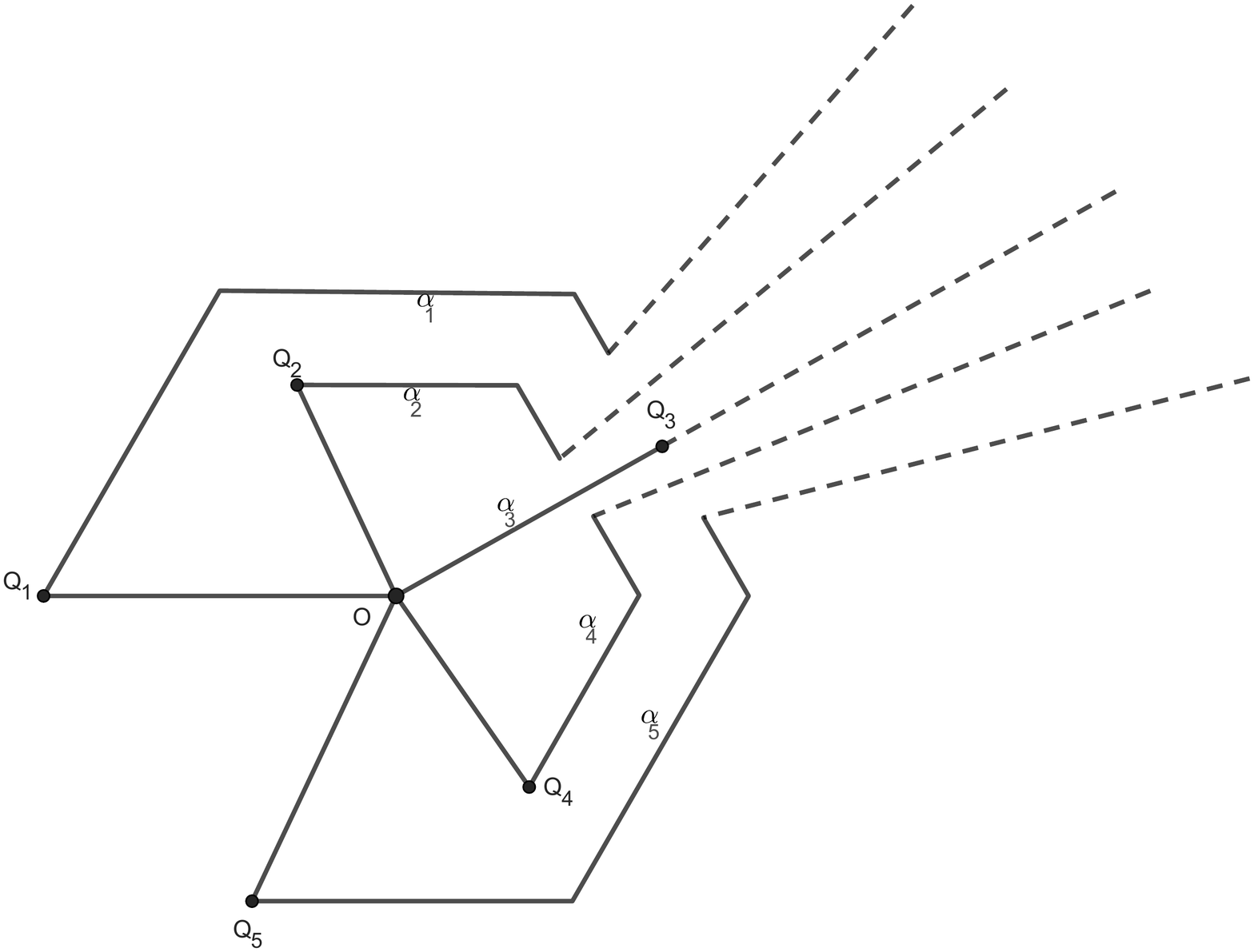}
 \caption{Replacing the endings in case I with polygonal curves $\alpha_1,\dots,\alpha_5$ to gain the desired properties}
 \label{pic:geoplan011}
\end{center}
\end{figure}

\noindent\textbf{Handling of Case I:} Suppose $e_1,\dots,e_5$ cross sides $A_{1,i}A_{2,i}$ of $H_i$ ($i=1,2$). % and suppose, without loss of generality, that $d_E(C_{3,1},A_{1,1})\le d_E(C_{3,1},A_{2,1})$. 
Set $\pi(e_1)=\pi(e_3)=\pi(e_5)=1$ and $\pi(e_2)=\pi(e_4)=2$. Also, set $Q_{e_1},\dots, Q_{e_5}$ as follows:
\begin{itemize}
\item $Q_{e_1}=A_{5,1}$%: the point on the segment $A_{4,1}A_{5,1}$ with $\measuredangle(Q_1,O,A_{5,1})=\pi/8)$;
\item $Q_{e_2}$: the point on the segment $A_{1,2}A_{6,2}$ with $\measuredangle(Q_2,O,A_{6,2})=\pi/37$;
\item $Q_{e_3}=C_{3,1}$;
\item $Q_{e_4}$: the point on the segment $A_{2,2}A_{3,2}$ with $\measuredangle(Q_4,O,A_{3,2})=\pi/37$; and
\item $Q_{e_5}$: the point on the segment $A_{3,1}A_{4,1}$ with $\measuredangle(Q_5,O,A_{4,1})=\pi/38$.
\end{itemize}
Furthermore, let $\alpha_{e_1}$ and $\alpha_{e_2}$ be counterclockwise around $H_1$ and $H_2$ , and $\alpha_{e_4}$ and $\alpha_{e_5}$ be clockwise around $H_2$ and $H_1$. Then, one can easily check that properties \textbf{(a)}-\textbf{(c)} are satisfied by $\alpha_{e_i}$s.\\[5pt]
\textbf{Handling of Case II:} Suppose $C_{i,1}$ belongs to the side $A_{i,1}A_{i+1,1}$ of $H_1$ for each $i\in[1\dcdot 5]$. As $\sigma(H_1)=0$, we have $\measuredangle(C_{i,1},O,A_{i+1})>\pi/37$ or, equivalently, $d_E(A_{i,1},C_{i,1})>s_1$ for each $i\in[1\dcdot 5]$, where $s_1$ is in \eqref{lemma: hexagon02-01}. Let
\begin{equation*}
a=\min\{d_E(A_{i,1},C_{i,1})-s_1: i\in[1\dcdot 5]\}.
\end{equation*}
Furthermore, to satisfy \textbf{(a)}-\textbf{(c)}, for each $i\in[1\dcdot 5]$ let $\alpha_i$ be clockwise around $H_1$ and pick $Q_{e_i}$ on the segment $C_{i-1}A_{i-1}$ such that
\begin{equation*}
d_E(Q_i,A_{i,1})=\frac{a(i-1)}{5}.
\end{equation*}
%o the side $A_{i,1}A_{i+1,1}$ of $H_1$ and let
%\begin{equation*}
%d_E(A_{i,1},C_{i,1})=s+a_i,
%\end{equation*}
%where $s=(1/2-\sqrt{3}/2\tan(31\pi/222))l_1$.  Let $a:=\min\{a_i:i\in[1\dcdot 5]\}$. Note that $a>0$, since each $a_i$ is positive. For each $i\in[1\dcdot 5]$ pick $Q_{e_i}$ on the segment $C_{i,1} A_{i-1}$ such that
%\begin{equation*}
%d_E(Q_i,A_{i,1})=\frac{a\cdot i}{6}, 
%\end{equation*}
%and let each $\gamma_{e_i}$ be counterclockwise around $H_1$.
\end{proof}
\begin{proof}[Proof of Theorem \ref{theorem: geoplanar1}] Let $\alpha$ be the minimum angle between pairs of edges of $G$ with a common endpoint, and choose $a>0$ small enough so that 
\begin{itemize}
\item the (minimum) Euclidean distance between any pair of non-incident edges is greater than $3a$; and
\item for every $v\in V(G)$, the ball of radius $6a$ centered at $v$ does not contain any vertex in $V(G)\setminus\{v\}$. 
\end{itemize}
Fixing $a$ as such, we break up every edge-segment $e_i=uv$ of length, say, $l_i$ into three parts, an initial part of length $2a$ starting, say, at $u$, a middle part of length $\lambda_i a$ where $\lambda_i\in\lceil l_i/a\rceil-4a$, and a terminal part ending at $v$ which is (necessarily) of a length between $2a$ and $3a$. Suppose $\lambda_1\le \dots\le \lambda_m$ where $m$ is the number of edges of $G$. Next, we apply Lemma \ref{lemma: hexagon02} to each vertex of the graph and with regular hexagons of side lengths $a$ and $1.5a$ to replace each of the initial and terminal parts of edge-segments with polygonal curves of at most five segments, one segment outside the hexagon associated with the edge-ending and up to four more segments, according to Lemma \ref{lemma: hexagon02}. Note that for each resulting edge polygonal-curve, the endings will consist of at most 10 segments of a total length less than $15a$. Therefore, for each $k\in \mathbb{N}$, by using $r_k a$ as the parameter of geometric graphs one can replace the endings of each edge with a total of not more than $10+15\lceil 1/r_k\rceil$ segments, which is bounded above by $10+60 k$, according to \eqref{lemma: geo-planar02+}. We choose $k\in\mathbb{N}$ large enough so that
\begin{equation*}
\lambda_1 k^2\ge 10+60k,\qquad \lambda_1 (3k^2+6k+1)\ge \lambda_m(5k+1),
\end{equation*}
and
\begin{equation*}
r_k \le \min\{2\sin(\alpha/2), \sqrt{3}/2(0.093)\}.
\end{equation*}
Then according to Lemma \ref{lemma: geo-planar}, the middle part of every edge polygonal curve can be replaced with a geometric path of appropriate lengths (between  $(5k+1)\lambda_i$ and $4k^2+6k+1)\lambda_i$ for each part of an initial (Euclidean) length $\lambda_ia$, so that the resulting graph $G'$ is a graph obtained from $G$ by replacing edges with paths of the same (graph) length. Moreover, according to Lemmas \ref{lemma: geo-planar} and \ref{lemma: hexagon02} and our choices for $a$ and $k$, $G'$ is a geometric plane graph. Finally, $C(G')\in\{C(G),C(G)+1\}$ according to Lemma \ref{lemma: cop-subdivision}.
\end{proof}
\begin{corollary}\label{cor.theorem: geoplanar1}
There is an infinite family of geometric graphs of cop number three.
\end{corollary}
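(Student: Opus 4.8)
The plan is to produce the infinite family promised by Corollary~\ref{cor.theorem: geoplanar1} by exhibiting an infinite family of planar graphs $G$ of cop number exactly three with maximum degree $\Delta(G)\le 5$, and then applying Theorem~\ref{theorem: geoplanar1} together with Lemma~\ref{lemma: cop-subdivision} to each of them. First I would fix a single planar base graph $G_0$ that is known to have cop number $3$ and whose maximum degree is at most $5$; the Petersen graph is cubic but not planar, so instead one takes a cubic planar graph of girth $\ge 5$, for instance the dodecahedron, for which Proposition~\ref{prop: girth-cop-num.} gives $C\ge 3$ and Theorem~\ref{thm: planar-3-cop-win} gives $C\le 3$, hence $C(G_0)=3$ and $\Delta(G_0)=3\le 5$. (In fact, the paper's Theorem~\ref{thm: dodec } already records that a subdivision of the dodecahedron has cop number $3$.)

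Next I would observe that the construction of Theorem~\ref{theorem: geoplanar1} does not give a single subdivision but a whole family of them: re-reading its proof, for every sufficiently large $k\in\mathbb{N}$ one obtains a planar \emph{geometric} graph $G'_k$ which is a subdivision of $G_0$ (each edge replaced by a path of a common length $l=l(k)$), and the lengths $l(k)$ are strictly increasing in $k$ since the admissible range of path-lengths, governed by Lemma~\ref{lemma: geo-planar}, grows without bound. Thus the graphs $G'_k$ are pairwise non-isomorphic (they have strictly increasing numbers of vertices), each is planar, each is geometric, and by Lemma~\ref{lemma: cop-subdivision} each satisfies $C(G'_k)\in\{C(G_0),C(G_0)+1\}=\{3,4\}$. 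It remains only to pin the cop number down to exactly $3$: since $G'_k$ is planar, Theorem~\ref{thm: planar-3-cop-win} forces $C(G'_k)\le 3$, and combined with the lower bound $C(G'_k)\ge C(G_0)=3$ coming from Lemma~\ref{lemma: cop-subdivision} we get $C(G'_k)=3$ for all large $k$. The family $\{G'_k\}_k$ is then the desired infinite family of geometric graphs of cop number three.

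The main obstacle, such as it is, is purely bookkeeping: one must make sure that the construction underlying Theorem~\ref{theorem: geoplanar1} really can be run for infinitely many distinct values of the subdivision length — i.e.\ that ``choosing $k$ large enough'' in that proof does not force a single length — and that the resulting graphs are genuinely distinct rather than isomorphic for different $k$. This is immediate from the inequalities in the proof of Theorem~\ref{theorem: geoplanar1}: the subdivision length $\lambda_i\,l$ assigned to an edge grows at least linearly in $k$, so the graphs have unbounded order. A secondary point worth stating explicitly is why we may pin the cop number at $3$ rather than settling for ``$3$ or $4$'': this is exactly where planarity of $G_0$, hence of $G'_k$, is used, via Theorem~\ref{thm: planar-3-cop-win}. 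No further calculation is needed, so the corollary follows in a few lines once the base graph $G_0$ is named.

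\begin{proof}[Proof of Corollary~\ref{cor.theorem: geoplanar1}]
Let $G_0$ be the dodecahedron, a $3$-regular planar graph of girth $5$. By Proposition~\ref{prop: girth-cop-num.}, $C(G_0)\ge 3$, and by Theorem~\ref{thm: planar-3-cop-win}, $C(G_0)\le 3$; hence $C(G_0)=3$. Since $\Delta(G_0)=3\le 5$, Theorem~\ref{theorem: geoplanar1} applies: its proof yields, for every sufficiently large $k\in\mathbb{N}$, a planar geometric graph $G'_k$ that is a subdivision of $G_0$ in which every edge is replaced by a path of a common length $l(k)$, where $l(k)\to\infty$ as $k\to\infty$. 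In particular the graphs $G'_k$ have strictly increasing orders and are pairwise non-isomorphic. Each $G'_k$ is planar, so $C(G'_k)\le 3$ by Theorem~\ref{thm: planar-3-cop-win}; and by Lemma~\ref{lemma: cop-subdivision}, $C(G'_k)\ge C(G_0)=3$. Therefore $C(G'_k)=3$ for all large $k$, and $\{G'_k\}$ is an infinite family of geometric graphs of cop number three.
\end{proof}
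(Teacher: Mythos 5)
Your proof is correct and follows essentially the same route as the paper: both start from the dodecahedron, apply Theorem~\ref{theorem: geoplanar1}, and pin the cop number at exactly three by combining the lower bound from Lemma~\ref{lemma: cop-subdivision} with the planarity bound of Theorem~\ref{thm: planar-3-cop-win}. The only (immaterial) difference is how the infinitude is produced: you rerun the construction for unboundedly many values of its internal parameter $k$, whereas the paper fixes a single output $G'$ and then further subdivides each of its segments into $k$ equal pieces, rescaling the geometric parameter to $r/k$.
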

\begin{proof}[Sketch of proof] Let $G$ be any straight-line embedding of the dodecahedron (or any other planar graph of cop number three). a planar graph of cop number three drawing of  graph of cop number three. Applying the construction described in the proof of Theorem \ref{theorem: geoplanar1} gives a planar geometric graph $G'$ with a parameter $r$, chosen as in the proof of the theorem. Given any $k\in \mathbb{N}$ replace every edge $e$ of $G'$ with a path of $k$ equal-length segments, without changing the geometry of $e$. The resulting plane graph $G_k$ will be a geometric graph with parameter $r/k$ (by construction) and the cop number of three.
\end{proof}
	
\section{Proof of Theorem \ref{theorem: geoplanar2}}\label{sec: planageom2}	
Let $G$ be a planar graph such that $\Delta(G)\le 9$, identified with any of its straight-line embeddings in the plane. We shall show how to construct a subdivision of (a drawing of) $K(G)$ which is geometric and has cop number $\ge C(G)$, where the latter will be established using Lemma \ref{lemma: cop-subdivision} alongside the following result:%perform clique substitutions show that by performing clique substitutions, one can edges one can build a straight-line drawing of $S(G)$, also named $S(G)$, with the following properties:
%\begin{itemize}
%\item For every knot $K$ of $S(G)$, $E(S(G))\setminus E(K)$ is disjoint from the interior of $K$,   
%\item two edges of $S(G)$ intersect only if they belong to the same knot of $S(G)$, and
%\item in $S(G)$ the proper edges can be replaced with polygonal curves with the same odd number of segments so that the resulting straight-line drawing, which we refer to as $G^*$, is a geometric graph. 
%\end{itemize}
%Then, it is easy to show that $C(G^*)\in\{C(G),C(G)+1\}$, a fact that can be to established by Lemma \ref{} and the following result:

\begin{lemma}\label{lemma: cop-clique}{\cite{CDM154}}
The operation of clique substitution does not decrease the cop number. %inparticular, for every graph $H$ one has $C(H)\le C(S(H))$.
\end{lemma}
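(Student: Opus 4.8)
The plan is to prove the equivalent statement $C(K(G))\ge C(G)$ by a simulation argument: a winning strategy for $k$ cops on $K(G)$ is pushed down to a winning strategy for $k$ cops on $G$ (equivalently, a robber that evades $k$ cops on $G$ evades $k$ cops on $K(G)$). The bridge is the projection $\phi\colon V(K(G))\to V(G)$ that collapses each knot onto the vertex it replaces; writing $Q_v$ for the knot replacing $v$ and $v_u\in Q_v$ for the vertex matched with the neighbour $u$, every edge of $K(G)$ either lies inside a knot or is the unique edge $\{u_v,v_u\}$ joining the knots of an edge $uv$ of $G$, so $\phi$ is a surjective \emph{reflexive homomorphism}: it sends adjacent-or-equal vertices to adjacent-or-equal vertices. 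I would have the real cops on $G$ keep themselves on the $\phi$-images of shadow cops that run the given winning strategy on $K(G)$, while a shadow robber on $K(G)$ is driven so as to keep $\phi(\text{shadow robber})$ equal to the real robber. The cop half of this invariant is immediate since $\phi$ is a reflexive homomorphism; and once the shadow cops capture the shadow robber, $\hat c_i=\hat r$, applying $\phi$ shows that real cop $i$ is on the real robber, so the game on $G$ is won exactly when the shadow game is.

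The one real difficulty is that $\phi(\text{shadow robber})$ cannot be kept equal to the real robber round for round: when the real robber crosses a $G$-edge $uv$, the shadow robber, which sits at some vertex of the clique $Q_v$, must first walk to the one vertex $v_u$ of $Q_v$ adjacent to $Q_u$ and only then step across --- two moves for one. I would absorb this by running the shadow game at double tempo, matching one round of the game on $G$ with two rounds of the game on $K(G)$, and the fact that makes the tempos compatible is purely structural: in $K(G)$ every vertex has exactly one neighbour outside its own knot, so two crossings performed in two consecutive moves must be a crossing followed immediately by its reverse; hence over any two consecutive moves in $K(G)$ an agent's $\phi$-image is displaced by at most one edge of $G$. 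Consequently two shadow-cop moves still project to a single legal move (or stay) of each real cop, while two shadow-robber moves always suffice to realise any single move of the real robber (reposition inside the current knot, then cross). Passing from a single clique substitution to $K(G)$ --- clique substitution at every vertex --- is then routine: use $\phi$ as defined, or induct over the vertices of $G$, substituting one at a time.

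I expect the hard part to be bookkeeping, not a further idea. Because the cops move first in each round, the shadow robber must lag the real robber by one round, so that a double round's two shadow-cop moves --- hence the single real-cop move --- can be computed before the real robber commits; and one must check that the shadow capture can be taken to occur at a boundary between double rounds, so that $\phi$ applies cleanly to conclude the real capture, rather than in the middle of a double round. It is convenient to enlarge the invariant to record whether the shadow robber is currently resting inside a knot or is mid-crossing between two knots; with that, the one-external-neighbour property keeps the two-move relocations well defined and consistent from one double round to the next, and nothing beyond careful case analysis appears to be required. (Combined with Lemma~\ref{lemma: cop-subdivision}, this gives $C\bigl(\text{any equal-length edge-subdivision of }K(G)\bigr)\ge C(K(G))\ge C(G)$, which is the inequality the proof of Theorem~\ref{theorem: geoplanar2} relies on.)
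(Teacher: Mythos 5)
The paper offers no proof of this lemma at all---it is quoted verbatim from \cite{CDM154}---so there is nothing internal to compare against; your write-up is, in effect, supplying the missing argument, and it is correct. The simulation you describe (collapse each knot via the projection $\phi$, run the winning $K(G)$-strategy at double tempo, and use the fact that every vertex of $K(G)$ has a unique neighbour outside its knot, so that two consecutive crossings cancel and any two consecutive moves displace the $\phi$-image by at most one edge of $G$) is essentially the argument in the cited reference. The two timing issues you flag do resolve as you expect: the one-round lag only means a real cop lands on the robber's position from the previous round, and since cops move first it finishes on the next turn; and a capture after the first of a pair of shadow moves is handled by letting the capturing shadow cop stand still for its second move.
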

%\todo{\cite{CDM154} provides a hard-to-read proof for $C(G)\le C(K(G)$ while it suffices to mirror a cop-winning strategy on any graph $G'$ obtained from $G$ by just one clique substitution to a winning strategy on $G$.}

\begin{proof}[Proof of Theorem \ref{theorem: geoplanar2} (Sketch)] The techniques are similar to those in the proof of Theorem \ref{theorem: geoplanar1} and related lemmas. The construction is carried out in two main phases:

\noindent \textbf{Phase I}: At each vertex $v$ of $G$ we pick a partition of the plane into cones with apex $v$ and angle $2\pi/9$, such that one of the edges incident with $v$ lies on one of the rays of the cones. We also consider four regular 9-gones $\Pi_{i,v}$ ($i\in[1\dcdot 4]$) centered at $v$ corresponding to the chosen partition of the plane at $v$, with distinct side lengths $s_i$ (independent of $v$) such that $\min\{s_i\}$ is substantially less that the shortest edge in $G$. Then, for each edge incident with $v$, we replace its end at $v$ with a polygonal curve consisting of a portion of the boundary of one of the $9$-gons and a segment from $v$ that lies on one of the rays in the decomposition. This phase is implemented in a similar fashion to the adjustments of the edge endings in Lemma \ref{lemma: hexagon02}, except that with up to nine possible edges incident with $v$, one needs to use the boundaries of four, rather than two, polygons.\\

\begin{figure}[H]
\begin{center}
 \includegraphics[scale=0.35]{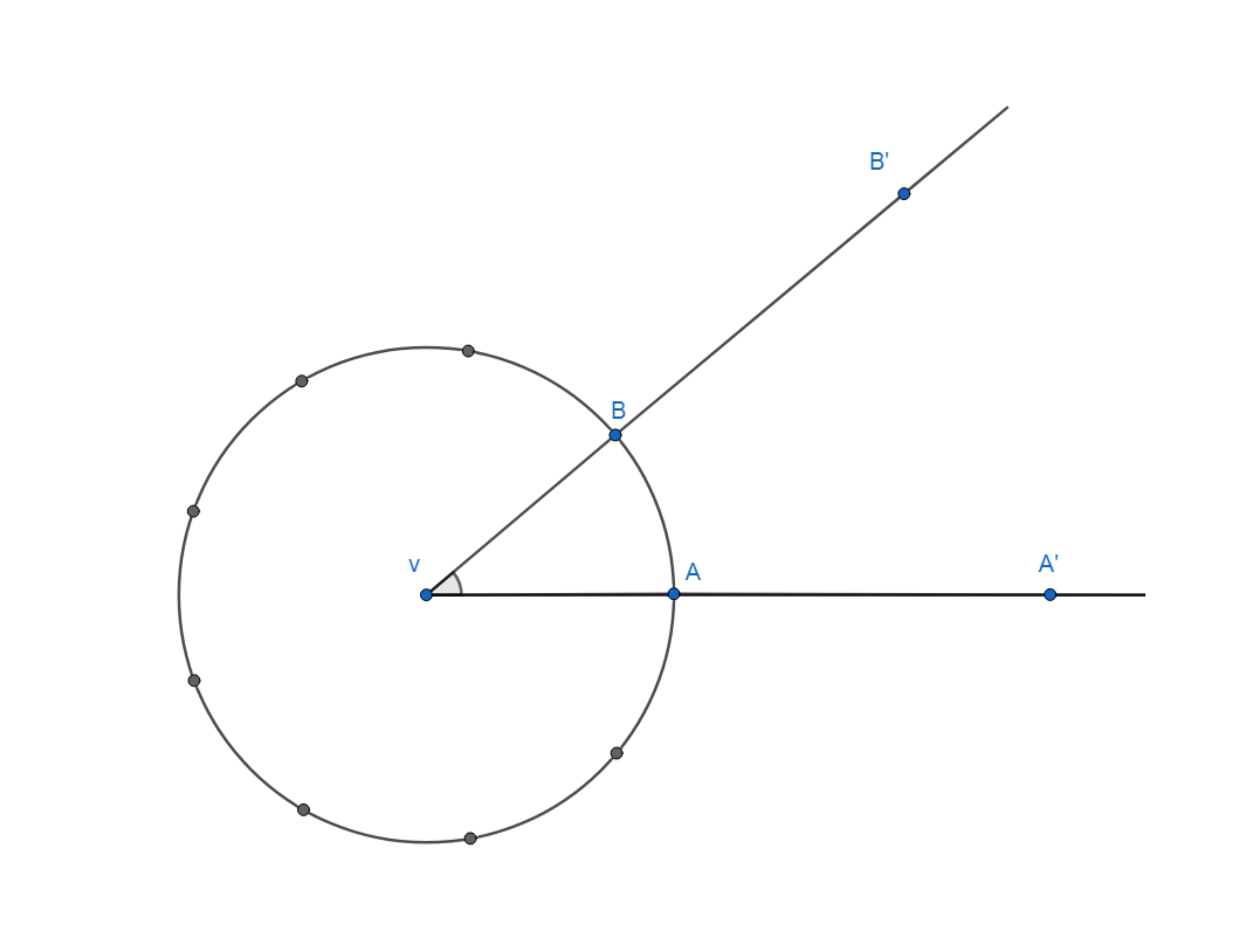}
 \caption{$A$ and $B$ are two consecutive vertices of a regular $n$-gon circumscribed by the circle of radius $r/2$ centered at $v$, and $|AA'|=|BB'|=r$. We need $|A'B'|>r$ or, equivalently, $n<\pi/(\sin^{-1}(1/3))$.} \label{pic:geoplan-last}
\end{center}
\end{figure}
\noindent \textbf{Phase II}: We pick the parameter $r$ of the geometric graphs such that $r\ll\min\{s_i\}$ and $r\ll\min_{i\not= j}|s_i-s_j|$. Then, we subdivide the edges so that the second vertices of endings at every vertex form a clique of size $deg_G(v)$. As such, by removing the original vertices of $G$, what we obtain is a drawing of a graph obtained from $K(G)$ by subdividing edges outside the knots. Finally, using the technique of Lemma \ref{lemma: geo-planar} we can adjust the latter graph to a geometric graph where the edges outsides the knots are subdivided into an equal number of edges. Note that we need $\Delta(G)\le 9$ in order to make sure that subdividing vertices for different edges incident to a vertex of $G$ do not lie within distance $r$ from each other (See Figure \ref{pic:geoplan-last}).  
\end{proof}
\section{Proof of Theorem \ref{thm: dodec }}\label{sec: planageom3}	
\begin{proof}[Proof of Theorem \ref{thm: dodec }]
Since the dodecahedron is a cubic graph with girth five, we have $C(G)=3$, according to Proposition \ref{prop: girth-cop-num.}; thereby, $C(G)\in\{3,4\}$, according to Lemma \ref{lemma: cop-subdivision}. But being a subdivision of a planar graph, $G$ is planar. Therefore, $C(G)=3$, according to Theorem \ref{thm: planar-3-cop-win}.
\begin{figure}[H]
\begin{center}
 \includegraphics[width=6in]{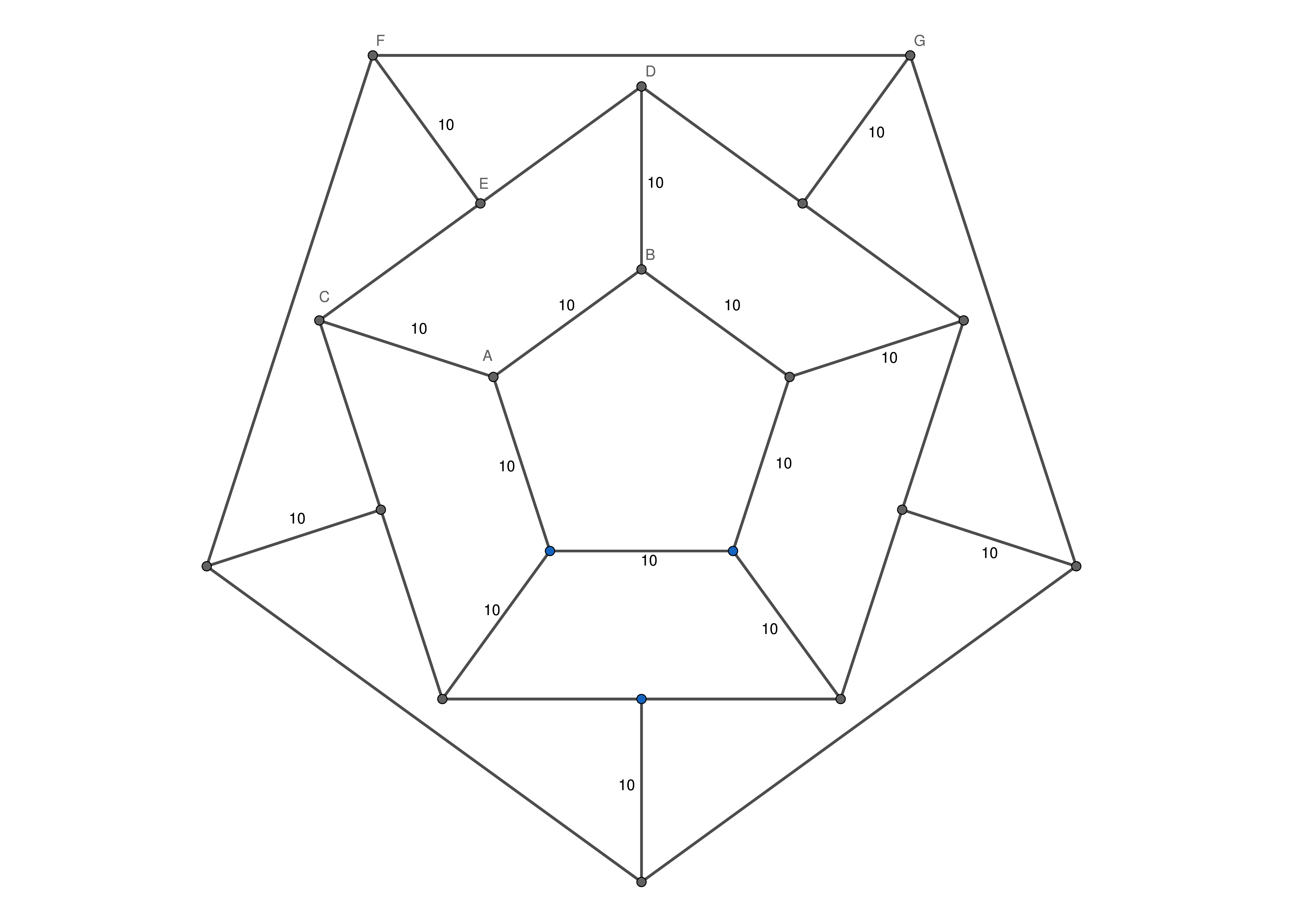}
 \caption{An embedding of the dodecahedron} \label{pic:dedec-1}
\end{center}
\end{figure}

 To complete the proof, we provide a geometric representation of $G$ derived from a specific straight-line embedding shown in Figure \ref{pic:dedec-1}. In this embedding, where number 10 next to some edges refers to their length, we consider five different type of edges represented by $AB$, $AC$, $CE$, $DE$, and $FG$. Next, we replace each of these edge types with appropriate polygonal curves, as shown in Figure \ref{pic:dedec-2}, so that the resultant graph remains planar
%\begin{itemize}
%\item The dodecahedron has cop number three.
%\item A straight-line embedding of the dodecahedron

%\item A polygonal-cure embedding of the Dodecahedron obtained from Figure \ref{pic:dedec-1}
\begin{figure}[H]
\begin{center}
 \includegraphics[width=5.3in]{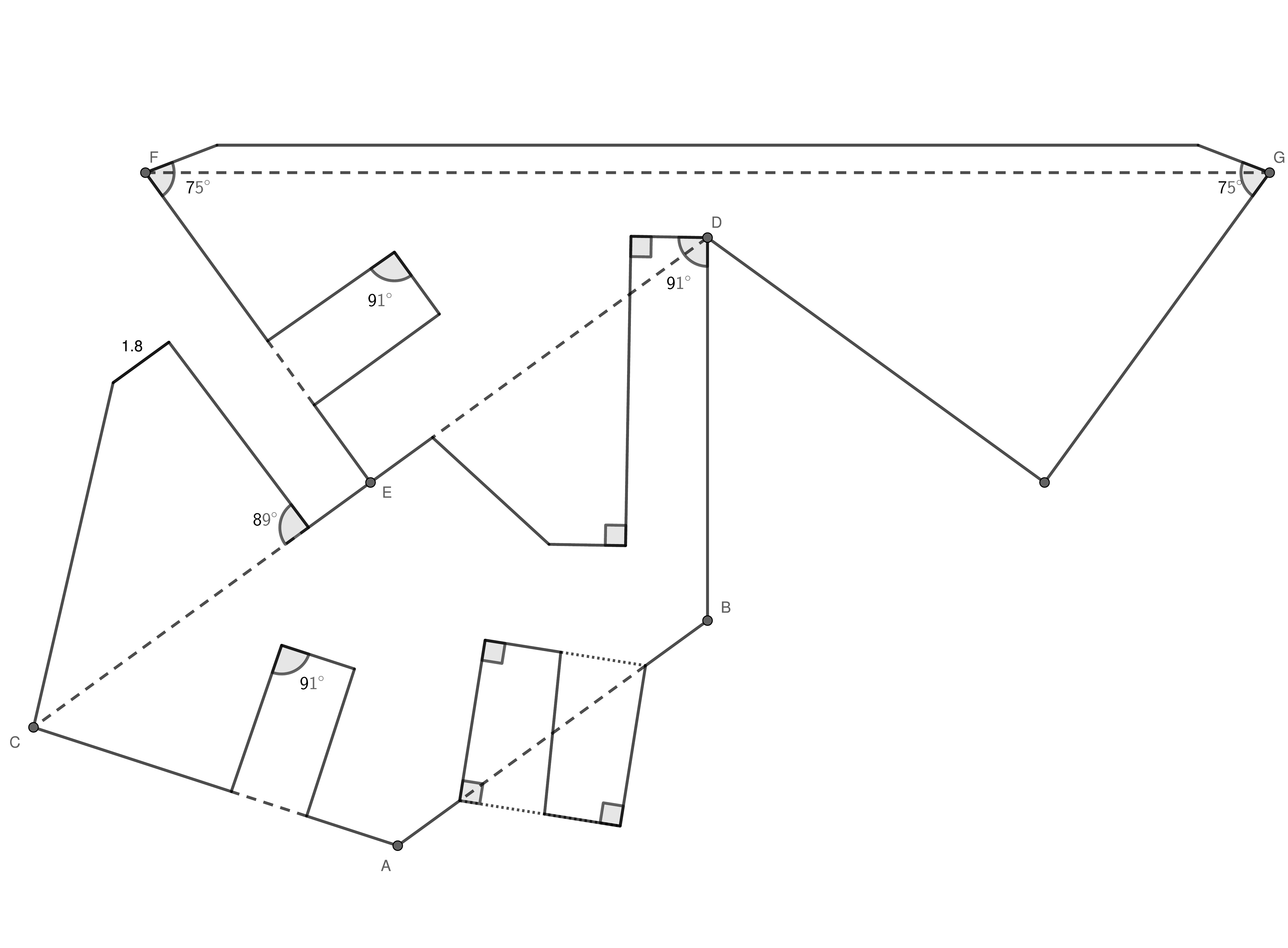}
 \caption{Replacing edges in Figure \ref{pic:dedec-1} with polygonal curves embedding of the Dodecahedron} \label{pic:dedec-2}
\end{center}
\end{figure}
Finally, we incorporate 14 new vertices along each of the polygonal curves curve in Figure \ref{pic:dedec-2} to made the embedding into a geometric graph. The details of the latter operations are shown in Figure \ref{pic:dedec-3}. One can easily check that the final embedding is indeed a geometric representation of $G$ with parameter $r=2$.
%\item Subdividing every edge in the polygonal-curve embedding obtained as in Figure \ref{pic:dedec-2} to 15 edges so that the resulting graph is geometric.
%\end{itemize}
\begin{figure}[H]
\centering
\begin{minipage}[t]{0.5\textwidth}
\centering
\includegraphics[width=3in]{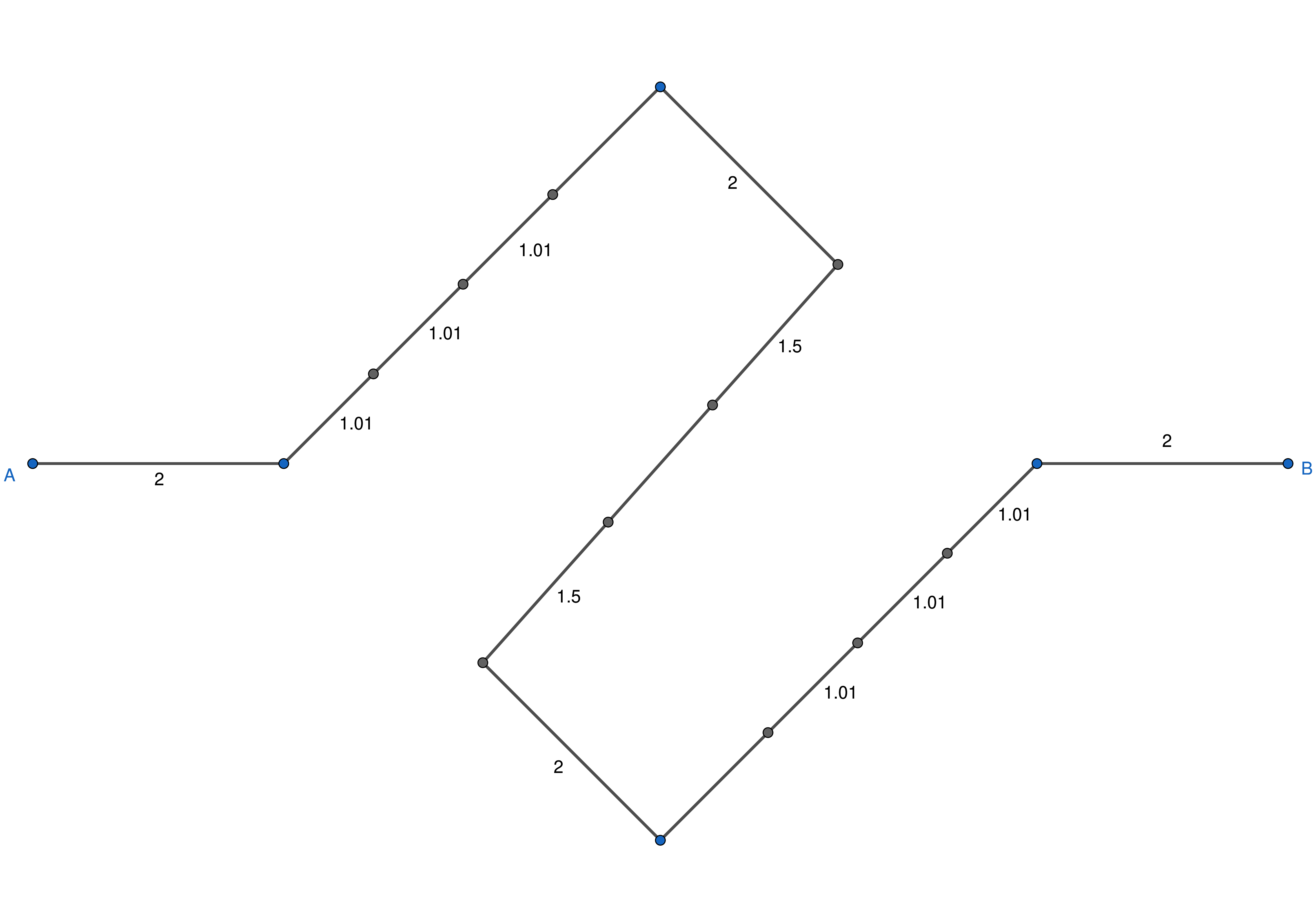}
\caption*{\textbf{(a) }Subdividing $AB$}
\end{minipage}\hfill\begin{minipage}[t]{0.5\textwidth}
\centering
\includegraphics[width=3in]{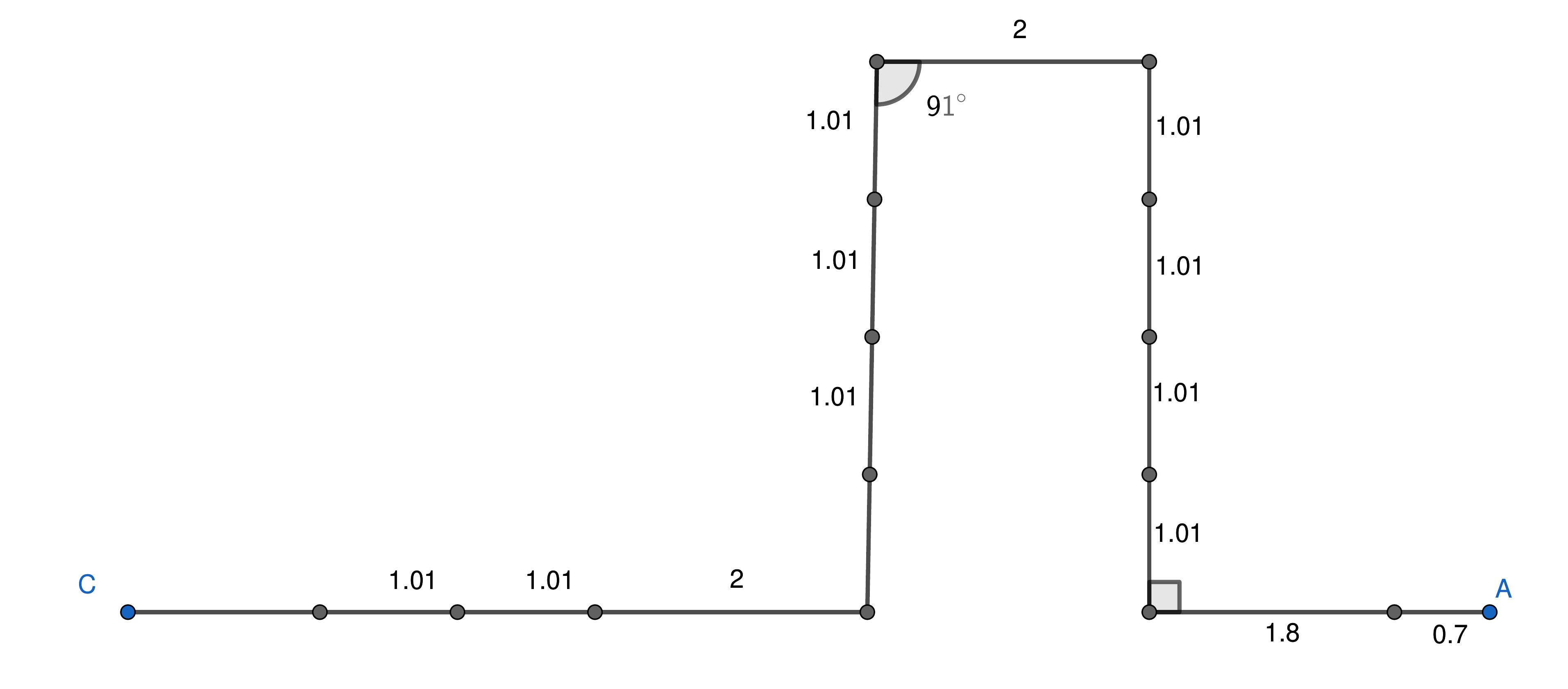}
\caption*{\textbf{(b) }Subdividing $AC$ and $EF$}
\end{minipage}
\begin{minipage}[t]{0.5\textwidth}
\centering
\includegraphics[width=3in]{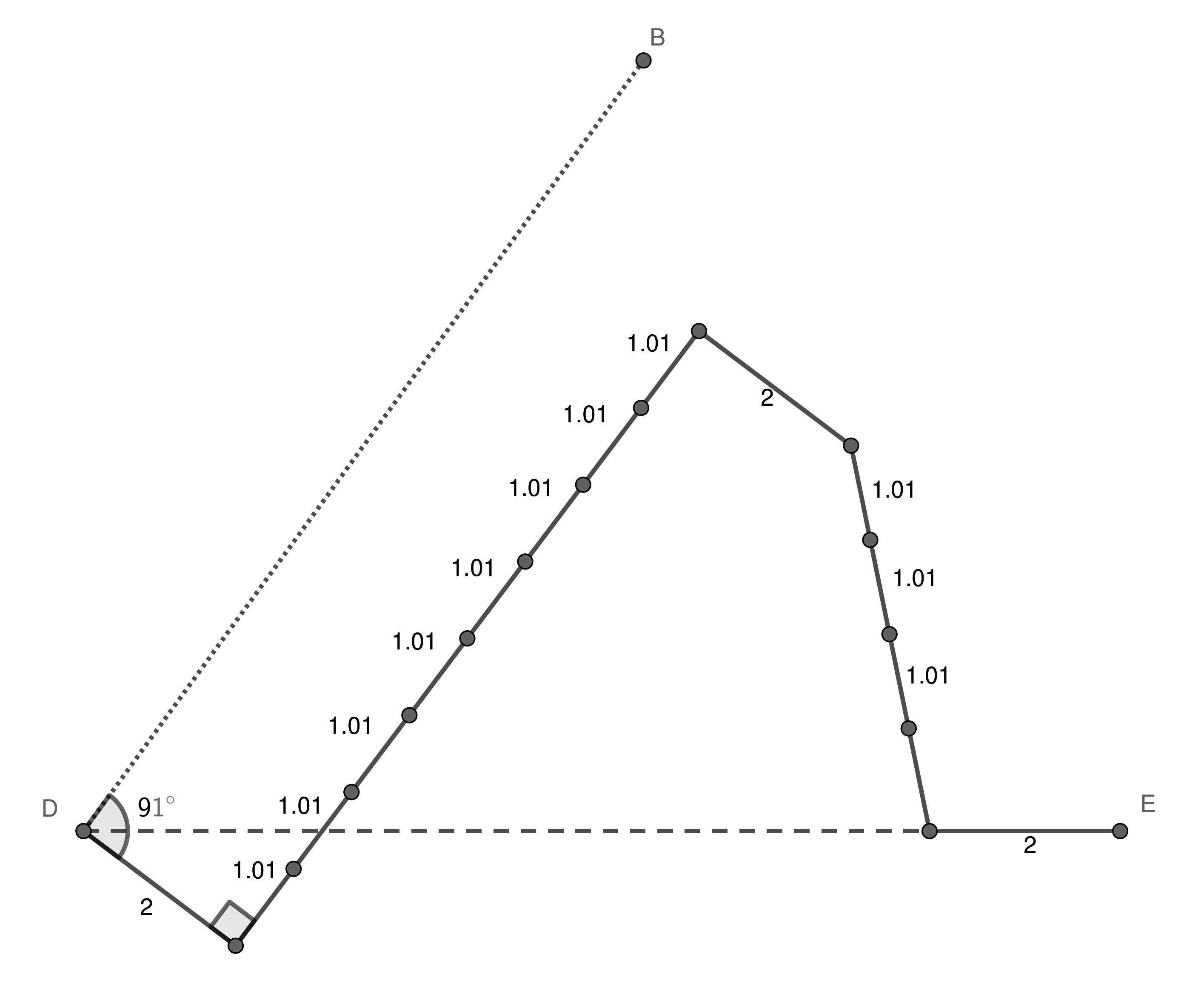}
\caption*{\textbf{(c) }Subdividing $DE$}
\end{minipage}\hfill\begin{minipage}[t]{0.5\textwidth}
\centering
\includegraphics[width=3in]{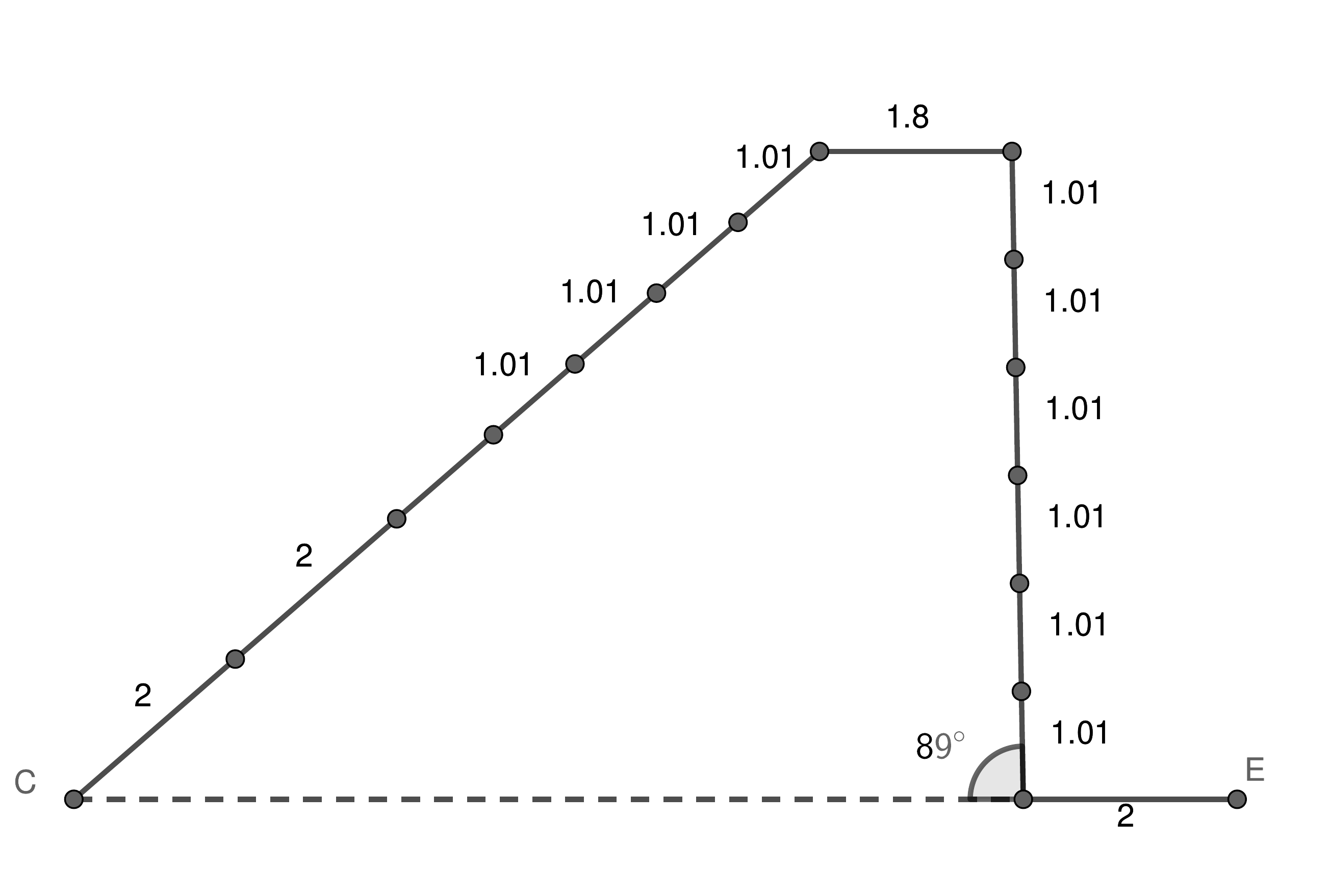}
\caption*{\textbf{(d) }Subdividing $CE$}
\end{minipage}\hfill\begin{minipage}[t]{0.9\textwidth}
\centering
\includegraphics[width=5.5in]{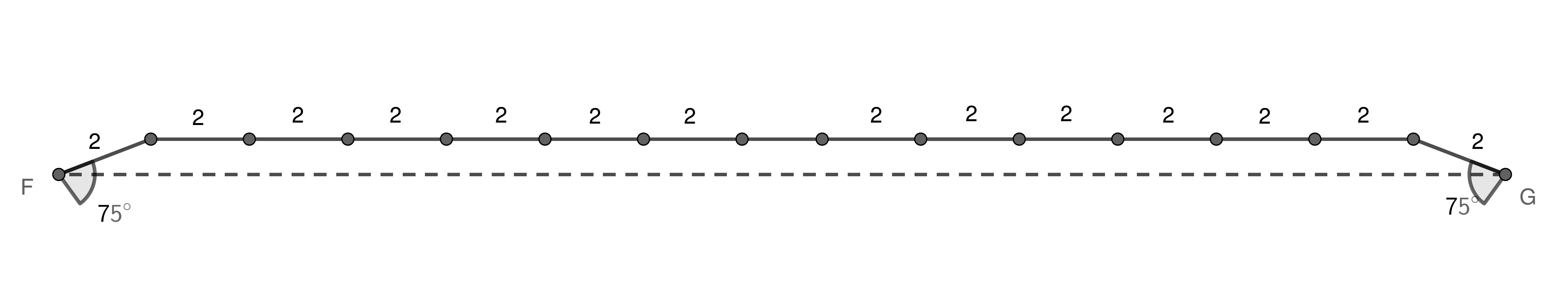}
\caption*{\textbf{(d) }Subdividing $FG$}
\end{minipage}\caption{Subdividing the graph obtain as in Figure \ref{pic:dedec-2}}\label{pic:dedec-3}
\end{figure}
\end{proof}
%\textcolor{red}{\section{A Small Geometric Graph of Cop Number Three}}\label{sec: small-geo-three }
%\textcolor{red}{\section{Concluding Remarks}}

\bibliographystyle{plain}

\end{document}